\documentclass[12pt]{article}
\usepackage[letterpaper,top=2cm,bottom=2cm,left=3cm,right=3cm,marginparwidth=1.75cm]{geometry}

\usepackage{amsmath,amssymb,amsfonts,amsthm}

\usepackage{graphicx}
\newtheorem{theorem}{Theorem}
\newtheorem{remark}{Remark}

\newtheorem{definition}{Definition}
\usepackage{verbatim}
\usepackage{mathtools}

\usepackage{xcolor}
\usepackage{url}

\usepackage[textsize=tiny]{todonotes}
\setlength{\marginparwidth}{1.8cm}
\usepackage{algorithm}
\usepackage{algpseudocode}
\usepackage{mathtools}
\usepackage{xcolor}
\usepackage{natbib}
\bibliographystyle{plainnat}
\usepackage[colorlinks=true, allcolors=blue]{hyperref} %
\usepackage{doi}
\definecolor{darkred}{rgb}{.7,0,0}

\definecolor{darkgreen}{rgb}{.15,.55,0}

\definecolor{darkblue}{rgb}{0,0,0.7}

\newcommand{\email}[1]{\href{mailto:#1}{#1}}




\author{%
 Nisha Chandramoorthy\thanks{\email{nishac@gatech.edu}, Georgia Institute of Technology} \and Florian Sch\"afer\thanks{\email{florian.schaefer@cc.gatech.edu}, Georgia Institute of Technology} \and Youssef Marzouk\thanks{\email{ymarz@mit.edu}, Massachusetts Institute of Technology}}
\title{Score Operator Newton Transport}

\begin{document}
\maketitle
%

%

\begin{abstract}
We propose a new approach for sampling and Bayesian computation 
that uses the score of the target distribution to construct a transport from a given reference distribution to the target. Our approach is an infinite-dimensional Newton method, involving an elliptic PDE, for finding a zero of a ``score-residual'' operator. We prove sufficient conditions for convergence to a valid transport map. Our Newton iterates can be computed by exploiting fast solvers for elliptic PDEs, resulting in new algorithms for Bayesian inference and other sampling tasks. We identify elementary settings where score operator Newton transport achieves fast convergence while avoiding mode collapse.

\end{abstract}

\section{{Introduction}}
\label{sec:intro}
Generating samples from a complex (e.g., non-Gaussian, high-dimensional) probability distribution is a core computational challenge in diverse applications, ranging from computational statistics and machine learning to molecular simulation. A recurring setting is where the density $\rho$ of the target distribution is specified up to a normalizing constant---for example, in Bayesian modeling, where $\rho$ represents the posterior density. Here, evaluations of the \emph{score} $\nabla \log \rho$ are often available as well, even for complex statistical models \citep{villa2021hippylib}. Alternatively, many new methods enable effective score estimation from data, without explicit density estimation; examples include score estimation from time series observations in chaotic dynamical systems \citep{chandramoorthy, angxiu} and score-based modeling of image distributions \citep{song2020score, Song2020DenoisingDI}. 

In these settings, transport or ``flow''-driven algorithms for generating samples have seen extensive success. The central idea is to construct a transport map from a simple, prescribed source distribution to the target distribution of interest. One class of transport approaches, e.g., as represented by variational inference with normalizing flows, involves constructing a \textit{parametric} class of invertible maps and minimizing some statistical divergence between the pushforward (see Section~\ref{sec:preliminaries}) 
of the source by a member of this class and the target. A different, essentially nonparametric, class of transport approaches are based on
particle systems, e.g., Stein variational gradient descent (SVGD) \citep{liu2016stein} and its many variants \citep{li2020stochastic, chen2020projected,detommaso2018stein}. These methods can be interpreted as gradient flows \citep{jko} of some functional on the space of probability measures, for different choices of geometry \citep{chewi2020svgd,duncan2019geometry}. Such methods yield implicit representations of transport maps, through the paths taken by sample trajectories \citep{han2017stein}. Yet another class of transport methods involve \textit{prescribing} continuous paths \citep{masrani2021q,albergo2023stochastic} between the source and target distributions, 
and approximating these paths with particle systems or learned velocity fields. 

Of course, none of these approaches is without drawbacks. Parametric representations of transport maps often involve ad hoc choices of parametric class, where bias must be balanced against complexity of the representation; moreover, the optimization problems that must be solved over such classes seldom have guarantees. On the other hand, gradient flow approaches, as exemplified by SVGD or more generally standard Langevin dynamics, may be slow to converge and quite sensitive to the geometry and dimensionality of the target distribution. 
Because the transport map is not explicitly available in these approaches, it is generally difficult to update the map, e.g., for perturbed scores, or to reuse the map for downstream computations. 
Continuous-time ``homotopy'' approaches require \textit{a priori} selection of a path through the space of probability measures, and may involve solving equations that depend explicitly on estimates of the density at the current time \citep{reich2011dynamical} or otherwise resorting to cruder approximations \citep{iglesias2021adaptive}.

This paper introduces a new sampling approach based on different ingredients: an infinite-dimensional score matching principle and discrete-time dynamics. Specifically, we construct a transport map as the zero of a \emph{score residual} operator via an infinite-dimensional \emph{Newton method} for root finding, which is typically called the Newton--Raphson method in finite dimensions. The transport is a \textit{composition} of maps found, at each step, via solution of a linear elliptic partial differential equation (PDE). We harness regularity theory for elliptic PDEs to prove existence of such a map and to prove convergence of the iterations. The resulting \emph{score-operator Newton} (SCONE) transport construction 
is illustrated in Figure~\ref{fig:figureKAM}. It applies to any sampling problem where the scores of the source and target measures can be evaluated. Several desirable features of our approach are as follows: 
\begin{itemize}

\item Newton methods are \emph{efficient}: we will show, empirically and in simple analytical examples (see Appendix \ref{appx:gaussian}), that very few iterations may be required. The Newton construction also permits an existing map to be updated or fine-tuned, e.g., for perturbed scores; this is useful for applications such as Bayesian filtering.

\item Unlike the nonlinear Monge-Amp\`ere equation, which describes optimal transport maps, our construction involves a sequence of \emph{linear} PDEs, which are more amenable to analysis, fast computation, and dimension reduction.
     
\item Elliptic differential operators instantaneously propagate information throughout the domain. Hence, our transport updates, which use elliptic PDE solutions, are intrinsically \emph{global} \citep{evans2022partial}. As evidenced by our numerical results, our construction thus tends to avoid mode collapse, since transport updates are influenced by score values over the entire support of the distributions, including the tails.
  
\end{itemize}

We summarize our main contributions as follows: We define a score transformation operator that maps an input score and a transport map to the transported score. We prove the existence of transport maps that are fixed points of an operator based on the score-transformation operator and the target score. Our existence proof is constructive and leads to a Newton method on Banach spaces. Our construction yields a transport map and defines a new notion of score-matching in infinite dimensions. Convergence of transport maps, and scores, is established in classical H\"older norms, unlike in dual norms typically used for variational inference methods.

In this paper, we establish the theoretical foundations of score operator Newton transport and provide proof-of-concept numerics. 
Our construction and theory are developed in the infinite-dimensional setting, enabling flexible representations of the transport map. Hence, the construction facilitates the development of many new sampling algorithms, based on kernel methods, deep neural networks, and other discretizations of the underlying linear elliptic PDEs. Developing such scalable algorithms for the Newton updates will be a subject of our future work. 

\begin{figure}
    \centering
    \includegraphics[width=0.8\textwidth]{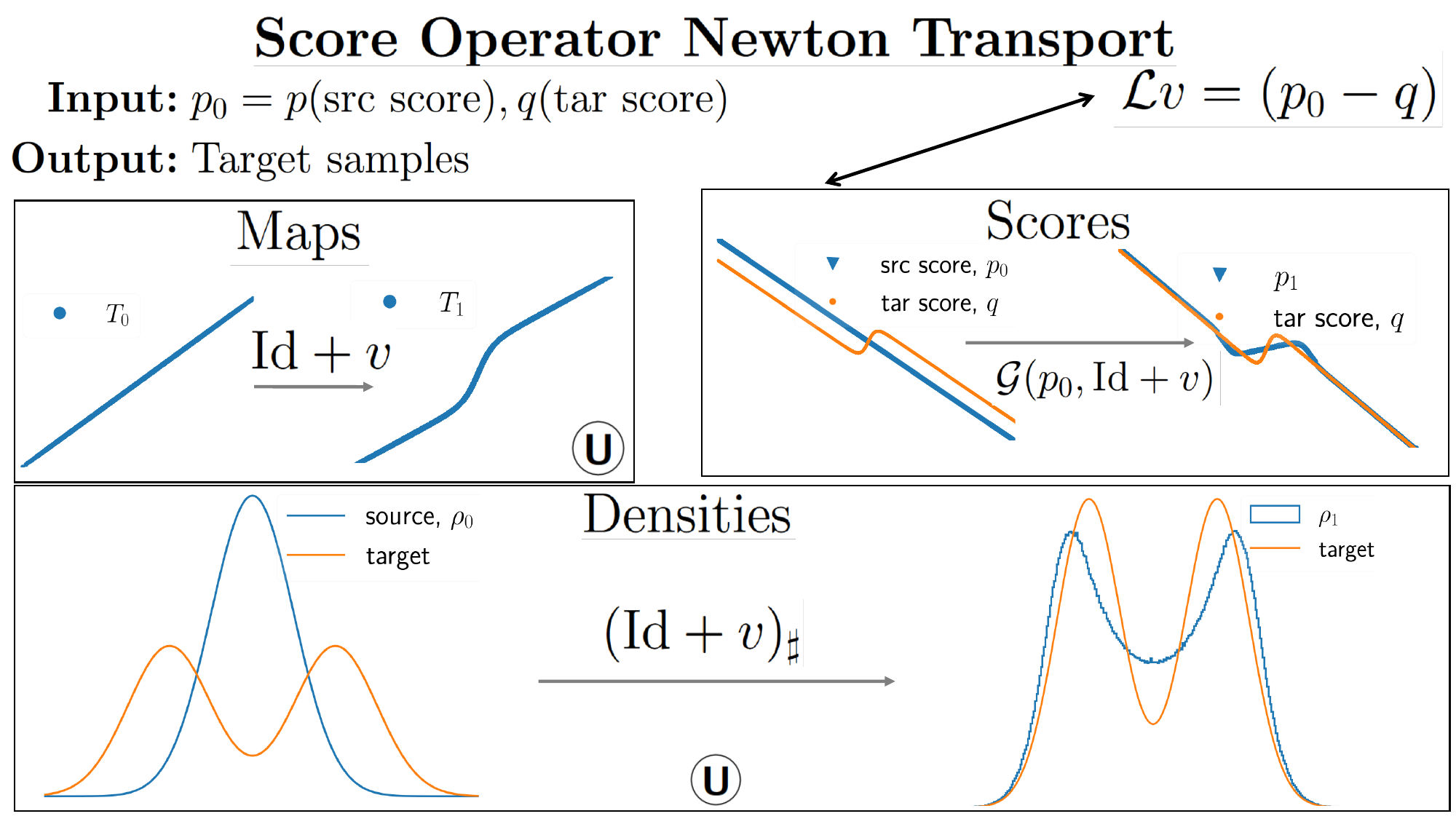}
    \caption{A graphical overview of the construction of transport maps. The \emph{score} of the source and target densities are given as inputs. The method outputs samples from the target distribution. Each iteration involves solving an elliptic PDE that gradually transports the score of the source to that of the target. The PDE solutions across iterations are combined via a simple composition operation to obtain the transport map from the source to the target.}
    \label{fig:figureKAM}
\end{figure}

\section{{Infinite-dimensional score matching}}
\label{sec:preliminaries}
Suppose $\nu$ is our unknown target probability measure on $\mathbb{R}^d$ with associated density $\rho^\nu.$ We define the \emph{score} of the target to be the vector-valued function $q:=\nabla \log \rho^\nu:\mathbb{R}^d\to\mathbb{R}^d.$ In our setting, the target score $q(x)$ is available at every $x \in \mathbb{R}^d.$ 

Let $\mu$ be a source or reference probability measure on $\mathbb{R}^d$ with density $\rho^\mu$. The source is chosen to be easy to sample from, e.g., a Gaussian in $\mathbb{R}^d$. Its vector-valued score function is defined as $p:= \nabla \log \rho^\mu.$ 
Here $\nabla$ is the gradient operator on Euclidean space. The main contribution of this work is a new transport map to sample from the target $\nu$ using the source samples from $\mu$ and the source and target scores, 
$$ p(x) := \nabla\log \rho^\mu(x), \quad q(x) := \nabla\log \rho^\nu(x).$$ Our transport map is defined as the solution of an infinite-dimensional root finding problem for the score operator. Next we define both these objects: transport maps and the score operator.

\textbf{Transport maps:} Given two probability measures $\mu$ and $\nu$ on $\mathbb{R}^d$, we say that a measurable function $T:\mathbb{R}^d \to \mathbb{R}^d$ is a \emph{transport map} from $\mu$ to $\nu$ if 
\begin{align}
\label{eq:pushforward}
    \nu(A) = (T_\sharp \mu)(A) \coloneqq \mu (T^{-1}(A)),
\end{align}
for every measurable set $A$; here $\sharp$ denotes the pushforward operation. In high-dimensional inference problems, the target $\nu$ that we wish to sample from is a often a complicated ``intractable'' measure. Our goal is to find an invertible map $T$ that transforms samples of $\mu$ to samples distributed according to $\nu$. Recall that samples from the source $\mu$ are easily obtained. 

In general, the transport map $T$ between the source and target measures is not unique. The optimal transport map is one useful and canonical choice, but in the context of Bayesian inference---where our main goal is to simulate $\nu$---it suffices to find any transport $T$ that is computationally feasible. Now we provide a constructive approximation of a map $T$ that satisfies \eqref{eq:pushforward} by exploiting the scores, $p = \nabla \log \rho^\mu$ and $q = \nabla \log \rho^\nu$, associated with the source $\mu$ and target $\nu$, respectively. To do this, we define a \emph{score operator} on the product space of functions representing scores and transport maps.

\textbf{Score operator:} We define a \emph{score operator}, denoted by $\mathcal{G}$, that takes as arguments a source score and an invertible transport map and returns the score of the resulting pushforward distribution. That is, if $T$ is an invertible map such that $T_\sharp \mu = \nu$, the operator $\mathcal{G}$ is defined such that
\begin{align}
    \label{eq:scoreOperator}
    \mathcal{G}(p, T) = q. 
\end{align}
Recall the change of variables formula for probability densities,
\begin{align}
    \rho^\nu = \dfrac{\rho^\mu \circ T^{-1}}{|{\rm det} \nabla T| \circ T^{-1}}.
\end{align}
Taking logarithms and differentiating the above formula, we obtain a definition for the score operator $\mathcal{G}$,
\begin{align}
\label{eq:scoreOperatorDefinition1}
\mathcal{G}(s, U) &= \left(s  (\nabla U)^{-1} - \nabla \log |{\rm det} \nabla U| (\nabla U)^{-1} \right) \circ U^{-1} \\
\label{eq:scoreOperatorDefinition2}
&= \Big(s \, (\nabla U)^{-1} - {\rm tr}\big(
(\nabla U)^{-1} \\
\notag
&\nabla^2 U \big) (\nabla U)^{-1} \Big) \circ U^{-1},
\end{align}
where \eqref{eq:scoreOperatorDefinition2} follows from using Jacobi's formula for the derivative of the determinant. We use ${\rm tr}((\nabla U)^{-1} \nabla^2 U)$ for the vector-valued function $[{\rm tr}((\nabla U)^{-1} \partial_1 \nabla U), \ldots, {\rm tr}((\nabla U)^{-1} \partial_d \nabla U)]^{\top}$, where $\partial_i$ is the partial derivative in the $i$th coordinate.
The above operator takes as arguments a score $s$ associated with a probability measure, say $\pi$, and a $\mathcal{C}^2$-diffeomorphism $U$, to return the score associated with the measure $U_\sharp \pi$.
That is, $\mathcal{G}$ expresses the change of variables, or the pushforward operation, on the space of scores. 
We end this section by listing some properties of $\mathcal{G}$ that will be useful in the sections to follow and can be checked by using the above definitions.  
\begin{enumerate}
    \item[(i)]  $\mathcal{G}(s, {\rm Id}) = s$ for any score $s$, expressing the fact that the identity coupling results in the same probability measure;
    \item[(ii)] $\mathcal{G}(s, {\rm Id} + c) = s\circ ({\rm Id} - c)$, where $c$ is a constant function;
    \item[(iii)] $\mathcal{G}$ has the group property, mimicking the pushforward operator. That is, 
    \begin{align*}
        \mathcal{G}(s, \psi_1 \circ \psi_2) = \mathcal{G}(\mathcal{G}(s, \psi_2), \psi_1).  
    \end{align*}
\end{enumerate}
Note that the operator $\mathcal{G}$ is not injective in the second argument, and hence not invertible. That is, $\mathcal{G}(s, U_1) = \mathcal{G}(s, U_2)$ does not imply that $U_1 = U_2$. Any solution $T$ to \eqref{eq:scoreOperator} is a valid transport map from $\mu$ to $\nu$. We refer to the problem of finding a $T$ that satisfies \eqref{eq:scoreOperator} as the infinite-dimensional score-matching problem. This is because the solution $T$ is a function, and the score of the pushforward distribution through $T$ matches the target score $q.$ Despite its name, our problem is not derived as an infinite-dimensional version of the score-matching problem from \citep{hyvarinen2005estimation}, which is essentially a density estimation problem. 
Here our objective is to obtain a transport map given target scores and to use it for sampling. In the next section, we will define a score-residual operator that maps a score and a transport map to the difference between the target score and the score of the pushforward of the source by the transport map. We will then derive a solution to the score-matching problem as a zero of this operator; thus, our solution strategy for transport also deviates from the variational problem involving the typical score-matching objective in the literature \citep{hyvarinen2005estimation, song2019generative, song2020score, wibisonoSGM}.

\begin{remark}[Availability of scores] As mentioned in the introduction, the target score is available in settings beyond Bayesian inference with a tractable likelihood and prior. In other words, in many settings, scores can be approximated well even though an explicit model for the unnormalized density is not available. One such example is the score of an ergodic, invariant measure of certain classes of chaotic systems. Here, fast methods have been developed to evaluate scores at any point on a chaotic orbit \citep{chandramoorthy,angxiu}.
\end{remark}
\begin{remark}[Validity of transport maps] Note that any diffeomorphism that satisfies $\mathcal{G}(p,T) = q$ is a valid transport map in the sense that $T_\sharp \mu = \nu$. By integrating both sides of \eqref{eq:scoreOperatorDefinition2}, we obtain that any $T$ that satisfies $\mathcal{G}(p,T) = q$ also satisfies, $\rho^\nu = k (\rho^\mu/ |{\rm det} \nabla T|)\circ T^{-1}$, for an integration constant $k.$ Since the left hand side is a valid density (integrates to 1), we obtain that $k$ must be 1.
\end{remark}

\section{{Learning a zero of the score-residual operator}}
\label{sec:construction}
 Fixing $p$ and $q$, we define \emph{the score-residual operator} on the space of $\mathcal{C}^2$-diffeomorphisms on $\mathbb{R}^d$ as  $$\mathcal{R}(T) \coloneqq \mathcal{G}(p,T) - q.$$ A zero of this operator is a transport map between the measures associated with $p$ and $q$. Here we describe an iterative approach for finding a zero of this operator, which is a generalization of Newton's method to infinite-dimensional spaces. 
\subsection{Score operator Newton (SCONE) method}
\label{sec:KAMNewton}
Infinite-dimensional generalizations of the Newton method appear in the analysis of PDEs under the name of Nash--Moser iteration \citep{berti2015nash}. They also appear in the context of finding \emph{conjugacies} between nearby dynamical systems, in an approach called the Kolmogorov--Arnold--Moser or KAM method \citep{moser1961new}.
In the next section, we derive sufficient conditions for the convergence of this method.

To develop the SCONE iteration, we expand the score operator, supposing that $p$ is close to $q$ and assuming a linear structure on function space near $(q,{\rm Id})$, 
\begin{align}
\label{eq:expansionOfG}
	\mathcal{G}(p, T) &= \mathcal{G}(q, {\rm Id}) + D_1\mathcal{G}(q, {\rm Id}) (p - q) + \\
\notag
	&D_2 \mathcal{G}(q, {\rm Id}) (T - {\rm Id}) + \Delta(p, T),
\end{align}
where ${\rm Id}$ is the identity function on $\mathbb{R}^d$, $D_1, D_2$ are first-order partial derivatives (Frech\'et derivatives) of $\mathcal{G}$ in its first and second arguments respectively, and $\Delta(p,T)$ contains nonlinear operators on $p-q$ and $T-{\rm Id}$. Analogous to the elementary Newton method, to find the solution to the score-matching problem, we first look for a solution to the \emph{linearized} score-matching problem. That is, we find such a $T$ for which the left hand side of \eqref{eq:expansionOfG} is $q$ and $\Delta(p,T) = 0$. This linearization of the score-matching problem yields,
\begin{align}
    - D_1\mathcal{G}(q,{\rm Id})\: (p-q) = D_2\mathcal{G}(q,{\rm Id}) v,
\end{align}
defining the vector field $$v \coloneqq T - {\rm Id}.$$ Since $\mathcal{G}(p,{\rm Id}) = p$ for all $p$, we have that $D_1\mathcal{G}(q, {\rm Id}) = {\rm Id}$. We can explicitly compute the linear operator $D_2\mathcal{G}(q, {\rm Id})$ to be the following differential operator, which, for convenience, we define as $\mathcal{L}(q)$, 
\begin{align}
    \label{eq:definitionOfL}
    -D_2 \mathcal{G}(q, {\rm Id})\: v = \mathcal{L}(q) \: v := \nabla q\: v + q \:\nabla v + {\rm tr}(\nabla^2 v).
\end{align}
In the above, the trace ${\rm tr}(\nabla^2 v)$ of the tensor $\nabla^2 v$ is a row vector with the $i$th column being ${\rm tr}(\partial_i \nabla v)$.
Using this, we obtain that $v$ satisfies,
\begin{align}
    \label{eq:newtonUpdateDerivation}
    (p - q) = \mathcal{L}(q)\: v.
\end{align}
We may then iterate this update in the following way. Assuming that $\mathcal{L}(q)$ is invertible, we obtain the solution $v = (\mathcal{L}(q))^{-1} (p-q)$. Using $T = {\rm Id} + v$, we obtain $p_1 = \mathcal{G}(p,T)$, and then, we repeat the update \eqref{eq:newtonUpdateDerivation} with $p_1$ replacing $p$, and solve for $v_1$. We then update the transport map approximation as $T_1 = ({\rm Id} + v_1) \circ ({\rm Id} + v)$. Proceeding further, at the $n$th iteration, we obtain the function $v_n$ by solving
\begin{align}
\label{eq:newtonStep}
     - (q - p_n) = \mathcal{L}(q)v_n =  (\nabla q) v_n + q_n (\nabla v_n) + {\rm tr}(\nabla^2 v_n).
\end{align}
Then, we set,
\begin{align}
\notag 
    T_{n+1} &\leftarrow ({\rm Id} + v_n) \circ T_n \\
    \label{eq:newtonUpdate}
    p_{n+1} &\leftarrow \mathcal{G}(p_n, {\rm Id} +v_n).
\end{align}
In the same spirit as the KAM method from the dynamical systems literature, notice that the differential operator $\mathcal{L}(q)$ remains the same across steps, \eqref{eq:newtonStep}. In the next section, we will give sufficient conditions under which the sequence of functions $(T_n)_{n \geq 0}$ converges, in a classical H\"older norm, to a transport map $T$, i.e., an invertible map that satisfies $T_\sharp \mu = \nu$. 
\begin{algorithm}
\caption{Score-operator Newton Transport}\label{alg:KAMNewton}
\begin{algorithmic}
\State $T_0(x) = x,$ $p_0(x) = \nabla \log \rho^\mu(x),$ $x_1, x_2,\cdots, x_m \sim \mu$
\While{$n\leq n_{\rm max}$}
\State $ v_n \gets \mathcal{L}(q)^{-1}(p_n - q)$
    \State $x_i \gets x_i + v_n(x_i)$
    \State $p_{n+1}(x) \gets \mathcal{G}(p_n, {\rm Id}+v_n)(x)$
    \State $n\to n+1$ 
\EndWhile
\State Return $\{x_i\}_{1\leq i\leq m}$
\end{algorithmic}
\end{algorithm}
\subsection{SCONE transport algorithm}
\label{sec:scON}
The steps of the infinite-dimensional Newton algorithm derived above are summarized in Algorithm \ref{alg:KAMNewton}. The input to the algorithm are black-box functions that return the source score $p_0 = p$ and $q$, the target score. In addition, we have $m$ iid samples from the source distribution, say, $\{x_i\}_{i=1}^m.$ After $n$ steps of the algorithm, these samples are transformed to $\{T_n(x_i)\}_{i=1}^m,$ which represent target samples more accurately as $n$ increases. The algorithm can also return the transport map $T$ as a black-box function that can be evaluated at any point.

In the beginning, we set the initial guess for the transport map $T_0$ (e.g., $T_0(x) = {\rm Id}(x) = x$) and $p_0 = p$, the source score. At each iteration, we solve the PDE \eqref{eq:newtonStep} to obtain the vector field $v_n$. There is extensive literature on approximating solutions of PDEs using neural networks, such as physics-informed neural networks \citep{raissi2019physics}, deep Ritz methods \citep{lu2021priori,yu2018deep} and Fourier neural operators \citep{li2020fourier}; as well as kernel methods and Gaussian process approximations \citep{owhadi2019operator,wendland2004scattered,schaback2006kernel,zhang2000meshless}, and finite elements \citep{ciarlet2002finite,wang2013weak}. Bespoke methods that exploit the particular structure of $\mathcal{L}(q)$ are deferred to future work (see Section \ref{sec:discussion}). 

Using meshfree methods such as PINNs or FNOs, we obtain a black-box solution $v_n$ that can be evaluated at any point. Moreover, these black-box solutions, being neural networks, can also be automatically differentiated. Thus, we can evaluate $v_n(x_i), \nabla v_n(x_i)$ and $\nabla^2 v_n(x_i)$, where $\{x_i\}_{i=1}^m$ are the samples at the $n$th iteration. When we use grid-based methods, we obtain (approximate) evaluations of $v_n$ at the grid points. Then, we use interpolations and finite-differences to obtain $v_n(x_i)$ and its derivatives. Using \eqref{eq:scoreOperatorDefinition2}, we update the source score $p_n$ as $p_{n+1} = \mathcal{G}(p_n, {\rm Id} + v_n).$ The samples are also transformed as $x_i \to x_i + v_n(x_i)$. Note that this results, at the end of the $n$th iteration, in the samples $\{T_n(x_i)\},$ where $T_n := ({\rm Id} + v_n) \circ T_{n-1}.$ We repeat this process until the algorithm converges, i.e., $\|v_n\|$ is close to zero or if the maximum number of iterations is reached.

\textbf{SCONE complexity:} The computational cost of our method is ${\cal O}(n\:(C_{\rm pde} + C_{\rm u})) $, where $n$ is the number of Newton iterations, $C_{\rm pde}$ is the cost of the solving the PDE and $C_{\rm u}$ is the cost of the update step \eqref{eq:newtonUpdate}. Typically Newton methods exhibit quadratic convergence \citep{GALANTAI200025}, which can be accelerated further for finite-dimensional problems under local smoothness conditions \citep{gerlach1994accelerated} (see Section \ref{sec:discussion}).
The cost of solving the PDE dominates the per iteration cost and na\"ive methods typically have a computational complexity that scales exponentially with the problem dimension ($d$). However, in the context of elliptic PDEs, various sparse structures in the solution have been exploited to mitigate the curse of dimensionality, including tensor decompositions \citep{tensorDahmen}, hierarchical low-rank approximations \citep{boulle2023learning}, and other notions of model reduction (see Section \ref{sec:discussion}).   

\subsection{Related work}
\label{sec:comparison}
In many variational methods for sampling and Bayesian inference, one seeks transport maps that minimize a divergence or distance functional on a space of probability measures, over a parametric class of maps $\mathcal{U}$. As an example, one could define a score-based distance metric, and seek a $T$ such that
\begin{align}
\label{eq:aapoScore}
    T = \arg \, \min_{U \in \mathcal{U}} \int \| q(x) - \mathcal{G}(p, U)(x) \|^2  \: d\nu(x).
\end{align}
Candidate maps $U \in \mathcal{U}$ are parameterized and an empirical minimization problem for the parameters is solved using a method that is appropriate for the distance functional. Common classes of parametric maps include normalizing flows \citep{papamakarios2021normalizing}, the flows of neural ODEs \citep{chen2018neural}, and gradients of input-convex neural networks \citep{huang2020convex}.
%
An alternative class of approaches produces \emph{implicit} representations of transport maps through paths taken by sample trajectories of a deterministic or stochastic dynamics. Beginning with the classical work of \citet{jko}, these dynamics are derived such that their mean field limits are gradient flows of the distance functional on the space of probability measures, for appropriate choices of objective and geometry \citep{chewi2020svgd,duncan2019geometry,han2017stein}. Methods that fall into this class include SVGD and Langevin dynamics. (See \citet{wibisono2018sampling} for an overview of the connection between sampling methods and optimization of distance functionals on probability spaces.)

Variational autoencoders \citep{kingma2019introduction} or GANs \citep{goodfellow2020generative} also transport a low-dimensional source measure to a target, learning parameterized decoders or generators by minimizing a variety of objectives. These methods have no obvious dynamical interpretation; their stable training and obtaining theoretical guarantees are challenging.

The SCONE method exploits scores but does not make use of parametric spaces to define transport maps. Rather than minimizing a distance functional, we derive a generalized Newton--Raphson method on Banach spaces to construct a zero of the operator $\mathcal{R}(T)$. This approach specifies the transport map as a composition of functions that is achieved in the limit of a  discrete-time dynamical system---as opposed to a continuous-time flow---on function spaces. Our SCONE transport defines, correspondingly, a discrete-time dynamical system on the space of probability densities, with the target being a fixed point. 

\section{{Convergence proof}}
\label{sec:proofs}
Here we prove the convergence of the SCONE algorithm in \ref{alg:KAMNewton}, establishing a new construction of a transport map. We do not compute explicit bounds on the error norms, $\|p_n - q\|$. We invoke elliptic regularity theory to establish convergence. Our Newton iterates \eqref{eq:newtonStep} yield second-order linear, elliptic PDEs, as we describe below. Let $M$ be a compact subset of $\mathbb{R}^d$ and $\Omega$ be an open set containing $M$.
At each step, we solve $d$ second-order PDEs of the following form,
\begin{align}
\label{eq:systemOfPDEs}
    \left(L(x,D) v\right)_i = f_i,\;1\leq i\leq d,
\end{align}
where the linear differential operator $L$ is given by the $d\times d$ matrix with $$L(x, D)_{ij} = \sum_{|\alpha| \leq 2} a_\alpha^{ij}(x) \: D^\alpha.$$
Here, $\alpha = (\alpha_1, \cdots, \alpha_d)$ is a multi-index, $D^\alpha = \partial^{\alpha_{1}}_1\cdots \partial^{\alpha_{d}}_d$, such that $|\alpha| = \alpha_1+\cdots +\alpha_d \leq 2$. Suppose we parameterize the solutions $v_n$ of the system as the gradient $\nabla \phi_n$ of some differentiable function $\phi_n:\Omega \to \mathbb{R}.$ Then, substituting into \eqref{eq:newtonStep}, we obtain the following equation for the scalar function $\phi_n,$
\begin{align*}
    \nabla(\nabla\cdot \nabla \phi_n) + \nabla (\nabla\phi_n\cdot q) = p_n - q.
\end{align*}
Integrating this equation, we find that our Newton iterates, when $v_n = \nabla \phi_n$ satisfies,
\begin{align}
\label{eq:sconeScalar}
    \nabla\cdot \nabla \phi_n + \nabla \phi_n \cdot q = \log(\rho^\mu_n/\rho^\nu) + C,
\end{align}
where $C$ is an integration constant and $\rho^\mu_n$ is the density of the pushforward measure, $T_{n\sharp}\mu.$ The operator, $\mathcal{P} = \nabla.\nabla + q\cdot\nabla,$ is an \emph{elliptic} operator, which ensures the well-posedness of solutions to \eqref{eq:sconeScalar}. We recall the ellipticity condition of an operator, which says that the highest-order (principal) symbol is coercive (see e.g., the textbook of \citet{Hormander1963} or \citet{semyon} for notes and \citet{gilbarg1977elliptic} for Schauder estimates). 
\begin{definition}[H\"older space of order $k$ and exponent $\gamma$] The H\"older space of order $k$ and exponent $\gamma$, denoted $\mathcal{C}^{k,\gamma}(\Omega)$, is a Banach space consisting of functions that have continuous derivatives up to order $k$ and $\gamma$-H\"older continuous $k$th order derivatives. It is complete with respect to the following norm, for $k \geq 1$, $f:\Omega \to \mathbb
{R}$,
\begin{align}
    \|f\|_{k,\gamma} := \|f\|_k + \max_{|\alpha| = k} \sup_{x\in \Omega} \|D^\alpha f\|_{0,\gamma}, 
\end{align}
where,
\begin{align}
    \|f\|_k &:= \max_{|\alpha| \leq k} \sup_{x \in \Omega} |D^\alpha f(x)| \\
    \|f\|_{0,\gamma} &:= \|f\|_0 + \sup_{x,y \in \Omega, x\neq y} \dfrac{|f(x) - f(y)|}{|x-y|^\gamma}.
\end{align}
We define H\"older continuous cotangent vector fields of the form $v = \nabla \phi \in D^*\bar{\Omega}$ for some differentiable function $\phi,$ where $D^*\bar{\Omega}$ is used to denote the cotangent bundle (rather than the usual $T^*\bar{\Omega},$ to avoid confusion with transport maps $T$).  If $\phi \in \mathcal{C}^{k, \gamma}(\Omega)$, then, we have that the components of $v$ (interpreted as scalar functions) are $\mathcal{C}^{k-1, \gamma}(\Omega).$ Note that, due to the (compact) embedding of H\"older spaces, $\mathcal{C}^{0,\gamma} \to \mathcal{C}^{0,\delta}$ for $\delta < \gamma$, $f \in (\mathcal{C}^{k,\gamma}(\Omega))^d$ with components $f_i \in \mathcal{C}^{0, \gamma_i}(\Omega)$ implies that $\gamma \leq \min_i \gamma_i$. In a slight abuse of notation, for a vector-valued function $f \in (\mathcal{C}^{k,\gamma}(\Omega))^d,$ including cotangent vector fields, we write,
\begin{align}
\label{eq:vectorNorm}
    \|f\|^2_{k,\gamma} := \sum_{i=1}^d \|f_i\|^2_{k,\gamma}.
\end{align}
\end{definition}

We use Schauder estimates of the type below \citep{gilbarg1977elliptic} from classical elliptic PDE theory.
\begin{theorem}\label{thm:elliptic}
        Let $\Omega$ be a bounded and open subset of $\mathbb{R}^d$ with a smooth boundary. Let $L(x,D) u = f$ be a second-order strongly elliptic system, with $L(x,D) = \sum_{|\alpha| \leq 2}  a_\alpha(x) D^\alpha$, and zero Dirichlet boundary conditions. If the coefficients $a_\alpha$ and the right hand side $f$ are in $\mathcal{C}^{s,\gamma}(\bar{\Omega})$, then, $u \in \mathcal{C}^{s+2,\gamma}(\bar{\Omega})$. In particular, for any $s \geq 0$, and $\gamma \in (0,1),$
    $$\|u\|_{s+2,\gamma} \leq K (\|f\|_{s,\gamma} + \|u\|_s),$$
where $K$ only depends on $\|a_\alpha\|_{s,\gamma}$ and $d$.
\end{theorem}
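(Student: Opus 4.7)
The plan is to follow the classical Schauder program: first establish the estimate for constant-coefficient operators on $\mathbb{R}^d$ (and on a half-space for boundary estimates), then recover the variable-coefficient interior estimate by freezing coefficients and perturbing, and finally bootstrap to $s \geq 1$ by differentiating the equation. The case $s=0$ is the heart of the argument; everything else is either induction on $s$, localization, or a geometric change of coordinates.

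First I would treat the constant-coefficient model $L_{x_0}(D)u = f$, where $L_{x_0}(D) := \sum_{|\alpha|=2} a^{ij}_\alpha(x_0)\, D^\alpha$. Strong ellipticity \eqref{def:elliptic} implies that the matrix symbol $L_{x_0}(i\xi)$ is invertible for $\xi \neq 0$ with operator norm $\lesssim |\xi|^{-2}$, so $u$ is represented either as a Fourier multiplier $\widehat u(\xi) = L_{x_0}(i\xi)^{-1}\widehat f(\xi)$ or as convolution against a matrix-valued fundamental solution $\Gamma_{x_0}$. The map $f \mapsto D^2 u$ is then a matrix-valued Calder\'on--Zygmund operator, and the key bound $\|D^2 u\|_{0,\gamma} \leq K\|f\|_{0,\gamma}$ follows either from direct kernel estimates on $\Gamma_{x_0}$ or, more cleanly, from the Campanato characterization of $\mathcal{C}^{0,\gamma}$ via mean-oscillation decay on balls.

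Next I would handle the variable-coefficient interior estimate. Cover a compactly contained $\Omega' \Subset \Omega$ by balls $B_r(x_k)$, and on each ball write $L(x,D) = L_{x_k}(D) + (L(x,D) - L_{x_k}(D)) + \text{(lower order)}$. Applying the frozen-coefficient estimate to $L_{x_k}(D)(\chi_k u)$, where $\chi_k$ is a cutoff, the discrepancy is controlled by $C r^\gamma \|a^{ij}_\alpha\|_{0,\gamma}\,\|D^2 u\|_{0,\gamma;B_r}$ because the leading coefficients are $\mathcal{C}^{0,\gamma}$; choosing $r$ small enough lets this term be absorbed on the left-hand side. Lower-order contributions and the $\|u\|_0$ term are handled by Ehrling-type interpolation between $\mathcal{C}^0$ and $\mathcal{C}^{2,\gamma}$. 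Summing over $k$ yields the interior $s=0$ bound. To extend to $\bar\Omega$, I would flatten $\partial\Omega$ locally by a $\mathcal{C}^{s+2,\gamma}$ diffeomorphism, producing a strongly elliptic system on a half-ball; the same freezing/perturbation scheme applies, with a half-space Poisson kernel replacing the whole-space fundamental solution (and an implicit boundary condition, as the theorem statement tacitly assumes, to close the solvability). Higher $s$ comes by differentiating the PDE: $D^\beta u$ satisfies a system of the same form with right-hand side in $\mathcal{C}^{s-|\beta|,\gamma}$, and induction on $s$ produces $\|u\|_{s+2,\gamma} \leq K\|f\|_{s,\gamma}$ with $K$ depending only on $\|a^{ij}_\alpha\|_{s,\gamma}$ and $d$.

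The main obstacle is that the scalar Schauder theory usually leans on maximum-principle and Harnack-type ingredients that are unavailable for systems, forcing the argument to proceed purely at the symbol level. The crux is therefore verifying that the strong-ellipticity condition \eqref{def:elliptic} --- which in this paper requires each entry of the principal part to be elliptic uniformly in $x$ --- yields matrix-symbol bounds that are uniform in the freezing point $x_0$ and depend on the coefficients only through their $\mathcal{C}^{s,\gamma}$ norms. Once this uniform invertibility of $L_{x_0}(i\xi)$ is in hand, the constants in the constant-coefficient Calder\'on--Zygmund estimates pass through the freezing argument unchanged, and the rest of the proof is the scalar blueprint applied componentwise.
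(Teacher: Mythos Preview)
The paper does not supply its own proof of this theorem: it is stated as a quoted result from \citet{morrey} (with a pointer also to \citet{giaquinta2013introduction}), and is then used as a black box in the proofs of Theorems~\ref{thm:score-matching} and the KAM--Newton construction. There is therefore no in-paper argument to compare against.

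Your sketch is a faithful outline of the classical Schauder program for strongly elliptic systems and matches what one finds in the cited references: constant-coefficient estimates via the matrix symbol or fundamental solution, the freezing-of-coefficients perturbation to pass to variable coefficients, localization and boundary flattening for the global estimate, and differentiation to bootstrap to higher $s$. You also correctly note that maximum-principle tools are unavailable for systems and that one must work at the symbol level, which is exactly the point Morrey's theory addresses. Two small comments: first, the paper's entrywise condition~\eqref{def:elliptic} is not the usual Legendre--Hadamard form of strong ellipticity for systems, but in the only application here the principal part of $\mathcal{L}(q)$ is ${\rm tr}(\nabla^2 v)$, i.e., the Laplacian acting componentwise, so the distinction is immaterial. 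Second, as you observe, the estimate as stated suppresses both a lower-order $\|u\|$ term and a boundary condition; your parenthetical about an implicit boundary condition being needed to close solvability is well taken, and the paper is tacitly relying on this when it asserts bijectivity of $d\mathcal{H}(0)$.
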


\begin{theorem}[Score-matching]
\label{thm:score-matching}
Let $\Omega$ be a bounded, open subset of $\mathbb{R}^d$ containing the origin, with a smooth boundary. Let $q \in \mathcal{C}^{s+1,\cdot}(\bar{\Omega})$ be the score of a target density $\rho^\nu \in \mathcal{C}^{s+2,\cdot}(\bar{\Omega})$. Then, for every $\epsilon > 0, s \in \mathbb{N}$ there exists a $\delta > 0$ such that for any reference density $\rho^\mu$ with associated score $p$ such that $\|p - q\|_s \leq \epsilon$, there is a transformation $T \in \mathcal{C}^{s+2,\cdot}(M)$ such that (i) $\mathcal{G}(p,T) = q$ and (ii) $\|T - {\rm Id}\|_{s+2} \leq \delta$.
\end{theorem}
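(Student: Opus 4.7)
The strategy is to iterate the KAM--Newton update \eqref{eq:newtonStep}--\eqref{eq:newtonUpdate} in Hölder scales, using Theorem~\ref{thm:elliptic} at every step. Fix an auxiliary exponent $\gamma\in(0,1)$ and work throughout in $\mathcal{C}^{k,\gamma}(\bar\Omega)$. First I would verify that the linearization $\mathcal{L}(q)$ defined in \eqref{eq:definitionOfL} is strongly elliptic in the sense of \citet{morrey}: its second-order part $\mathrm{tr}(\nabla^2 v)$ acts componentwise as the Laplacian, so the principal symbol is $\delta_{ij}|\xi|^2$ and condition \eqref{def:elliptic} holds with $C(\Omega)=1$, while the lower-order coefficients are $q$ and $\nabla q$, both in $\mathcal{C}^{s,\gamma}$. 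Fixing a linear boundary condition (for example, homogeneous Dirichlet, or more generally one that implicitly encodes the image requirement $T_n(\Omega)\supseteq\mathrm{supp}\,\nu$ discussed after \eqref{eq:diffeq}), Theorem~\ref{thm:elliptic} then produces a unique $v_n\in(\mathcal{C}^{s+2,\gamma}(\bar\Omega))^d$ solving \eqref{eq:newtonStep}, with the Schauder estimate $\|v_n\|_{s+2,\gamma}\leq K\|p_n-q\|_{s,\gamma}$, where $K$ depends only on $d$ and $\|q\|_{s+1,\gamma}$.

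Next I would derive a quadratic recursion on the residual $u_n:=p_n-q$. The Taylor expansion \eqref{eq:expansionOfG} of $\mathcal{G}(p_n,\,\cdot\,)$ around $\mathrm{Id}$ is, by construction of $\mathcal{L}$, cancelled at leading order by the Newton step $\mathcal{L}(q)v_n=u_n$, leaving only the mismatch between $\mathcal{L}(q)$ and $\mathcal{L}(p_n)$ together with the quadratic tail. An algebraic manipulation of \eqref{eq:scoreOperatorDefinition1}--\eqref{eq:scoreOperatorDefinition2} analogous to the one-dimensional derivation in the proof of Proposition~\ref{prop:analyticityOfw} should yield a schematic identity
\[
u_{n+1}\;=\;\bigl(u_n\circ(\mathrm{Id}-v_n)-u_n\bigr)+\nabla v_n\,u_n\circ(\mathrm{Id}-v_n)+\bigl(q\circ(\mathrm{Id}-v_n)-q\bigr)\nabla v_n+\mathcal{Q}(v_n),
\]
where $\mathcal{Q}(v_n)$ collects the Jacobian--Hessian remainder that is at least quadratic in $v_n$. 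Applying the mean value theorem to the composition differences and the Hölder product rule, every term is bounded by $C(\|q\|_{s+1,\gamma})\,\|u_n\|_{s,\gamma}\,\|v_n\|_{s+2,\gamma}$, and combining with the Schauder bound above yields the target estimate $\|u_{n+1}\|_{s,\gamma}\leq K'\|u_n\|_{s,\gamma}^{\,2}$.

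The principal obstacle is the loss-of-derivatives phenomenon characteristic of Nash--Moser and KAM schemes. The remainder $\mathcal{Q}(v_n)$ and the composition terms above involve $\nabla v_n$ and $\nabla^2 v_n$, so the quadratic bound would naïvely require more derivatives than one gains from a single inversion of $\mathcal{L}(q)$. The resolution is that the Schauder estimate gains \emph{exactly} two derivatives: the two derivatives falling on $v_n$ in the nonlinearity are absorbed by the gain from $\mathcal{L}(q)^{-1}$, which is precisely why the hypothesis $q\in\mathcal{C}^{s+1,\gamma}$ (hence $\nabla q\in\mathcal{C}^{s,\gamma}$) is tight for the conclusion $T\in\mathcal{C}^{s+2,\gamma}$. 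A second technical point is to show that $\mathrm{Id}+v_n$ remains a $\mathcal{C}^{s+2,\gamma}$ diffeomorphism and that composition with it and with its inverse is bounded in $\mathcal{C}^{s,\gamma}$ with constants depending only on $\|v_n\|_{s+2,\gamma}$; the inverse function theorem in Hölder scales together with an inductive control of $\|v_n\|_{1}$ handles this.

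Given the quadratic recursion, a straightforward induction shows that if $\epsilon$ is small enough so that $K'\epsilon\leq 1/2$, then $\|u_n\|_{s,\gamma}$ decays doubly exponentially and $\sum_n\|v_n\|_{s+2,\gamma}$ converges. Since $T_{n+1}=(\mathrm{Id}+v_n)\circ T_n$, the sequence $(T_n)$ is Cauchy in $\mathcal{C}^{s+2,\gamma}$ and converges to some $T$ with $\|T-\mathrm{Id}\|_{s+2}\leq\sum_n\|v_n\|_{s+2,\gamma}=:\delta(\epsilon)$, where $\delta(\epsilon)\to 0$ as $\epsilon\to 0$. The group property from Section~\ref{sec:scoreOperator} gives $p_{n+1}=\mathcal{G}(p,T_{n+1})$, and continuity of $\mathcal{G}$ in its second argument together with $p_n\to q$ yields $\mathcal{G}(p,T)=q$, establishing (i); the bound on $\|T-\mathrm{Id}\|_{s+2}$ provides (ii).
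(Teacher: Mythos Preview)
Your proof is essentially correct but takes a route genuinely different from the paper's. The paper proves Theorem~\ref{thm:score-matching} by a soft application of the inverse function theorem on Banach spaces: it introduces $\mathcal{H}(v):=\mathcal{G}(q,\mathrm{Id}+v)$, checks that $d\mathcal{H}(0)=-\mathcal{L}(q)$ is strongly elliptic and hence (via Theorem~\ref{thm:elliptic}) a linear homeomorphism $(\mathcal{C}^{s+2,\gamma})^d\to(\mathcal{C}^{s,\gamma})^d$, and then invokes the abstract inverse function theorem to conclude that $\mathcal{H}$ is a local diffeomorphism near $v=0$. The transport is then $T=(\mathrm{Id}+v)^{-1}$ with $v=\mathcal{H}^{-1}(p)$. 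No iteration, no explicit remainder estimate, no quadratic recursion.

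You instead carry out the KAM--Newton iteration directly and establish the quadratic recursion $\|u_{n+1}\|_{s,\gamma}\le K'\|u_n\|_{s,\gamma}^2$ by hand. This is closer in spirit to the paper's proofs of Theorem~\ref{thm:analytic} (the analytic case) and of the subsequent KAM--Newton construction theorem, which the paper treats separately. Your derivative accounting is correct: because $\mathcal{L}(q)^{-1}$ gains exactly two derivatives and the nonlinearity costs at most two (through $\nabla^2 v_n$), there is no genuine Nash--Moser loss here, so smoothing operators are unnecessary. What your approach buys is constructiveness and an explicit (double-exponential) rate; what the paper's inverse-function-theorem approach buys is brevity and the avoidance of the somewhat delicate remainder identity and composition estimates that you label ``schematic.'' Both arguments only deliver the conclusion for $\epsilon$ below some threshold determined by $q$; the statement's universal quantifier over $\epsilon$ is not really supported by either proof, but that is an issue with the statement rather than with your argument.
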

\begin{proof}
Define
\begin{align}
	\notag
	\mathcal{H}(v) &:= \mathcal{G}(q, {\rm Id} + v) = \Bigg( q  \: ({\rm Id} + \nabla v)^{-1} - \\ 
	\notag
	&{\rm tr}(({\rm Id} + \nabla v)^{-1}\: \nabla^2 v) \\ 
	\label{eq:defH}
	&({\rm Id} + \nabla v)^{-1} \Bigg)\circ ({\rm Id} + v)^{-1}.
\end{align}
From definition \eqref{eq:defH}, note that $\mathcal{H}(0) = q$. We can explicitly compute the first derivative of $\mathcal{H}$ at 0 to be
\begin{align}
    d\mathcal{H}(0) w = - \nabla q\: w - q \: \nabla w - {\rm tr}(\nabla^2 w). 
\end{align}
 We can also deduce that $d\mathcal{H}(0) \nabla \phi$ is equivalent to $-\nabla \mathcal{P} \phi,$ where $\mathcal{P} = \nabla \cdot \nabla + q\cdot \nabla$ is an elliptic operator, which is Fredholm on $\mathcal{C}^{s+2,\gamma}(\bar{\Omega})$. Let $C^{s,\gamma}(\bar{\Omega})$ denote the quotient space of $\mathcal{C}^{s,\gamma} (\bar{\Omega})$ corresponding to  the equivalence relation $f \sim g$ if $\nabla f(x)= \nabla g(x),$ at all $x \in \bar{\Omega}.$ Correspondingly, we define a closed subspace of $\mathcal{C}^s$ cotangent vector fields, $\mathcal{V}^{s,\gamma} := \{ x\to v(x) = \nabla \phi(x) \in D_x^* \bar{\Omega}, \phi \in {C}^{s+1,\gamma} (\bar{\Omega}), x \in \Omega \},$ with norm $\|\nabla \phi\|_* = \|\nabla \phi\|_{s,\gamma}$ and let $B_\theta^{s+2,\gamma}(0)$ be a $\theta$-ball around 0 in $\mathcal{V}^{s+2,\gamma}$. The element, $\phi,$ in a sufficiently small set around the constant element in $C^{s+3,\gamma}(\bar{\Omega})$ can identify an element, $\nabla \phi,$ of $B_\theta^{s+2,\gamma}(0).$  We note that the operator $\mathcal{H}:B^{s+2,\gamma}_\theta(0) \to (\mathcal{C}^{s,\gamma}(\bar{\Omega}))^d$ is well-defined as a continuous operator. 

When $d \mathcal{H}(0)$ is defined on $\mathcal{V}^{s+2,\gamma},$ and using Theorem \ref{thm:elliptic}, we know that its kernel only contains the zero element of $\mathcal{V}^{s+2,\gamma},$ which corresponds to $\mathcal{P}\phi =$ const $\in C^{s,\gamma}(\bar{\Omega})$. Thus, we obtain that $d\mathcal{H}(0):\mathcal{V}^{s+2,\gamma}\to \mathcal{V}^{s, \gamma}$ is bijective. In particular, for a fixed $q$, the $s+2$-H\"older norm of $w$ that solves $d\mathcal{H}(0) w = f$ for an $f \in \mathcal{V}^{s,\gamma}$ is bounded above, by Theorem \ref{thm:elliptic}. Hence, $d\mathcal{H}(0)^{-1}$ is continuous on $\mathcal{V}^{s,\gamma}$. 

Thus, we can apply the inverse function theorem for $\mathcal{H}$. There exists an open neighborhood, $B^{s,\gamma}(q,\epsilon)$, of radius, say $\epsilon$, of $q$ in $\mathcal{V}^{s, \gamma}$ and a continuously differentiable map $\mathcal{I}:B^{s,\gamma}(q,\epsilon)\to \mathcal{V}^{s+2,\gamma}$ so that $\mathcal{I} \circ \mathcal{H}(v) = v$. Thus, for any $p = \mathcal{H}(v) \in B_{s,\gamma}(q,\epsilon)$, the map $T = ({\rm Id} + v)^{-1}$ is such that 
$\mathcal{G}(p, T) = q$. This proves $(i)$. From the continuity of $\mathcal{I}$ at $q$ in $B_{s,\gamma}(q,\epsilon)$, we can choose a $\delta_0 > 0$ such that $\|(T - {\rm Id})^{-1}\|_{s+2,\gamma} \leq \delta_0$. This implies that for some $\delta > 0$, $\|v\|_{s+2,\gamma}\leq \delta$, hence proving (ii).  
\end{proof}
The above is an existence result for a transport map and is established via the inverse function theorem for the score operator. It is important to note that even though the result is local (that is, for nearby probability densities), uniqueness cannot still be established for the transport map. This is because, in the above proof, the operator $d\mathcal{H}(0)$ has a non-empty kernel on function spaces containing functions of the form, $\nabla \phi,$ for some $\phi \in \mathcal{C}^{s+2,\gamma}(\bar{\Omega}).$ For any function $\phi$ such that $\mathcal{P}\phi = f + {\rm const},$ $v = \nabla \phi$  solves $d\mathcal{H}(0) v = \nabla f,$ and therefore $d\mathcal{H}(0)$ is not injective. In other words, we have not shown that there is only one transport map between a given source and target density, even when they are close to each other. 
We have defined quotient spaces on which to define $d\mathcal{H}(0)$ to make it invertible, using isomorphism theorems for vector spaces. 

Proving the inverse function theorem via the Banach fixed point theorem both establishes the existence of the desired map $T$ and also the means to construct $T$ as a fixed point iteration of the contraction map. In the theorem below, we explicitly define such a contraction map whose fixed point is $T$. Further, the fixed point iteration of the map is equivalent to our SCONE iteration. 
Such an interpretation of the Newton--Raphson method as a fixed point iteration of the linearization of the given map is indeed classical in numerical analysis, when we are interested in finding zeros of a function on a finite-dimensional space. The following theorem extends this idea to infinite-dimensional spaces and serves as the convergence proof of the SCONE method. 

\begin{theorem}[SCONE construction of transport] \label{thm:kam-newton} When $q \in (\mathcal{C}^{s,\gamma}(\bar{\Omega}))^d$, there exists a $\theta > 0$ such that for any $p$ in a $\theta$-neighborhood of $q$, $p_n \to q$ in $(s,\gamma)$-H\"older norm, where,  
\begin{align}
\notag
    v_n &= \mathcal{L}(q)^{-1} (p_n-q) \\
\label{eq:iterationInsideTheorem}
    p_{n+1} &= \mathcal{G}(p_n, {\rm Id} + v_n),\;\;
    n\in \mathbb{Z}^+, \: p_0 = p.
\end{align}
\end{theorem}
\begin{proof}
Recall the definition of $\mathcal{L}(q)$ from \eqref{eq:definitionOfL}. Note that $\mathcal{L}(q)$ is not invertible on $(\mathcal{C}^{s,\gamma}(\bar{\Omega}))^d,$ and $v_n$ in the statement of the theorem refers to any cotangent vector field $v_n = \nabla \phi_n$ such that $\mathcal{L}(q) v_n = p_n - q.$ We show in the proof of Theorem \ref{thm:score-matching} that $\mathcal{L}(q)^{-1}$ is a homeomorphism between a $\theta$-neighborhood of $q$ in $\mathcal{V}^{s,\gamma}$ (see the proof of \ref{thm:score-matching} for the definition of this space) and a $\mathcal{V}^{s+2,\gamma}$ neighborhood of zero. Thus, choosing $\theta > 0$ sufficiently small, we can combine both the SCONE iteration and update steps (\eqref{eq:newtonStep} and \eqref{eq:newtonUpdate}) to define the operator,
\begin{align}
    \mathcal{J}(v) = \mathcal{L}^{-1}(\mathcal{G}(q + \mathcal{L}v, {\rm Id} + v) - q),
\end{align}
where we write $\mathcal{L}$ to indicate $\mathcal{L}(q)$ for a fixed target score $q$. It is clear that $\mathcal{J}(0) = 0$ and hence $0$ is a fixed point of $\mathcal{J}$. The operator $\mathcal{J}$ is smooth at $0$. In particular, through direct computation, we can verify that its first derivative, $d\mathcal{J}(0) = 0$ as an operator from $B^{s+2,\gamma}_\theta(0)$ to $\mathcal{V}^{s+2,\gamma}$. Applying the mean value theorem, we get,
\begin{align}
\notag
    \|\mathcal{J}(v_1) - \mathcal{J}(v_2)\| &\leq \|v_1 - v_2\| 
    \\
    &\sup_{\eta \in (0,1)} \|d\mathcal{J}(\eta v_1 + (1-\eta) v_2)\|.  
\end{align}
Since $d\mathcal{J}(0) = 0$ and $d\mathcal{J}$ is continuous, we can choose a $\delta < \theta$ such that for all $v \in B_\delta^{s+2,\gamma}(0)$, $\|d\mathcal{J}(v) \| \leq (1/2)$. Thus, we obtain that $\mathcal{J}$ is a contraction on $B_\delta^{s+2,\gamma}(0)$. Thus, the fixed point iteration of $\mathcal{J}$, i.e., $v_{n+1} = \mathcal{J}(v_n)$, starting from any $v_0 \in B_\delta^{s+2,\gamma}(0)$ converges to 0. Note that $\|v_0\| = \|\mathcal{L}^{-1}(p_0 - q)\| \leq C (p_0 - q)$, from the continuity of $\mathcal{L}^{-1}$. Thus, if $p_0 \in B_\epsilon(q)$, one can choose $\delta_0 := \min\{\delta, C\epsilon\}$ such that $\mathcal{J}$ is a contraction on $B_{\delta_0}^{s+2,\gamma}(0)$ and hence, the iteration converges. 

Now we show that the convergence of $v_n \to 0$ implies the convergence of $T_n$ to a transport $T$. Recall that 
$T_k = \circ_{n=0}^k ({\rm Id} + v_n)$, with the compositions being on the left. Hence, $\|T_n - T_{n-1}\| = \| v_n \circ T_{n-1}\| \leq \|v_n\| \to 0$. Finally, to see that the limit $T := \lim_{n\to \infty} T_n$ is a transport, from \eqref{eq:iterationInsideTheorem} for a finite $n$, $p_{n+1} = \mathcal{G}(p_{n}, {\rm Id} + v_n) = \mathcal{G}(p_0, T_n)$, by applying the group property of $\mathcal{G}$ iteratively.
Taking the limit $n \to \infty$ on both sides, we obtain $q = \mathcal{G}(p_0, T)$.
\end{proof}
\begin{remark}
	Instead of a Newton method, one can also define a different fixed-point iteration on an operator defined using the score-residual operator (see Appendix \ref{appx:fixedPoint}). However, under similar smoothness assumptions, these fixed point iteration methods typically show slower convergence \citep{smale1985efficiency}.
\end{remark}
\begin{figure}
	\includegraphics[width=0.3\textwidth]{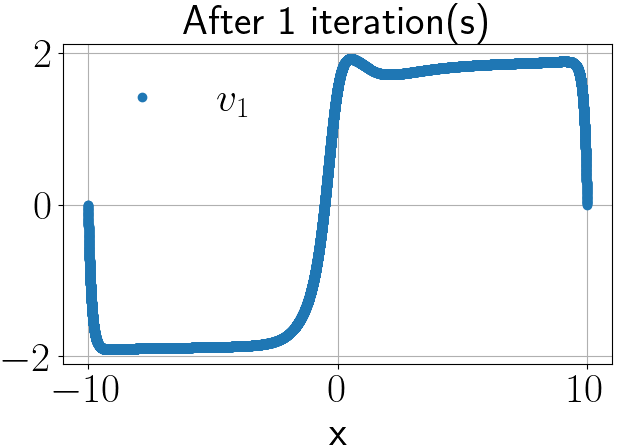}
	\includegraphics[width=0.3\textwidth]{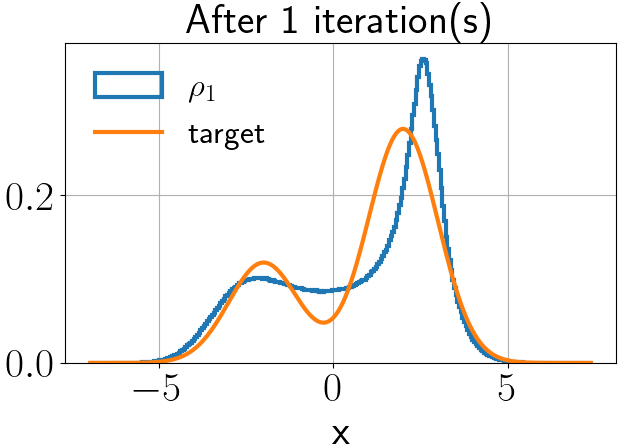}
	\includegraphics[width=0.3\textwidth]{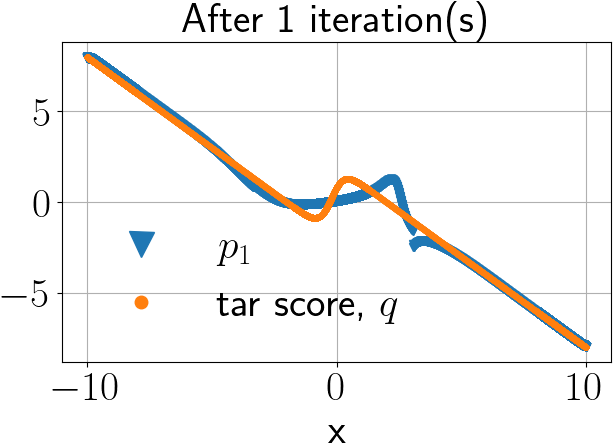}\\
	\includegraphics[width=0.3\textwidth]{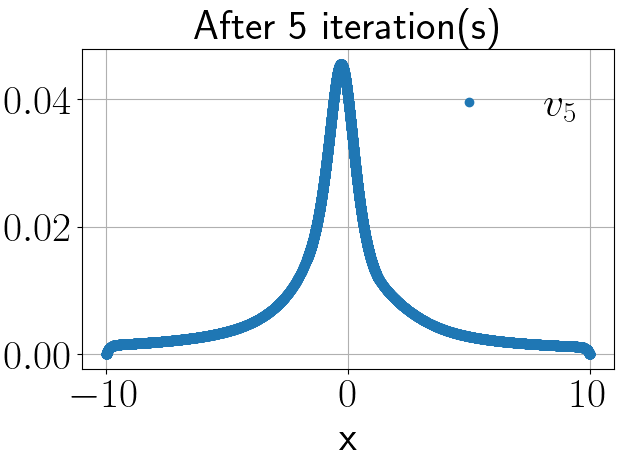}
	\includegraphics[width=0.3\textwidth]{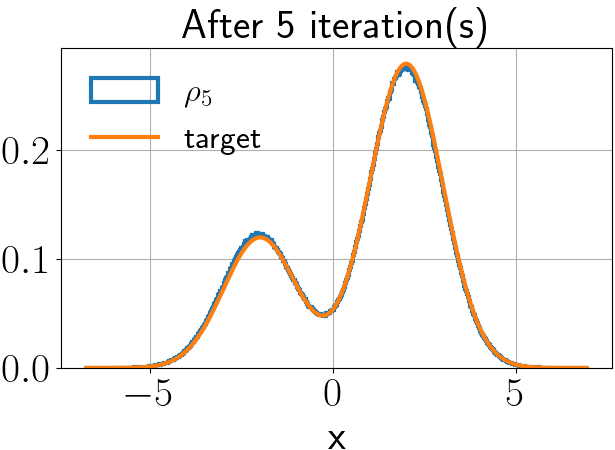}
	\includegraphics[width=0.3\textwidth]{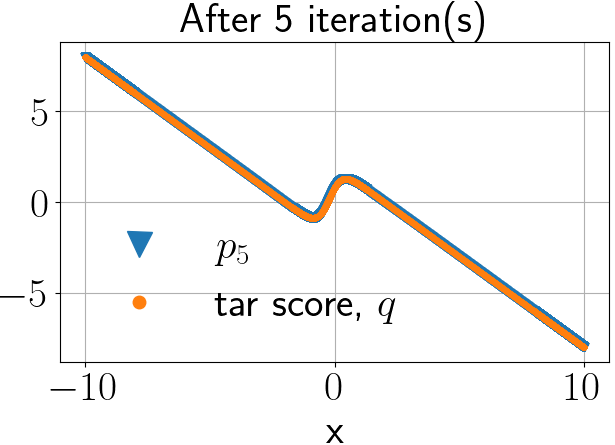}
	\caption{Left: the solution $v_n$ after 1 (top) and 5 (bottom) iterations computed using second order finite difference with {\cal O}(500) grid points. Center: the transformed empirical density. Right: $p_1$ (top) and $p_5$ (bottom).}
 \label{fig:bimodal0.3}
\end{figure}

\section{{Numerical results}}
\label{sec:numerics}
Here we present proof-of-concept numerical results that demonstrate the convergence of our SCONE transport (Algorithm \ref{alg:KAMNewton}). 
On 1D domains, we find that the SCONE transport map converges to the monotone map or the increasing rearrangement, which is optimal with respect to any convex cost (see Chapter 2 of \citet{santambrogio2015optimal}). We also demonstrate that SCONE transport maps can effectively tackle multimodality in the target. See Appendix \ref{sec:appx-num} for details on the numerical methods and additional experiments.
\begin{figure}
	\includegraphics[width=0.34\textwidth]{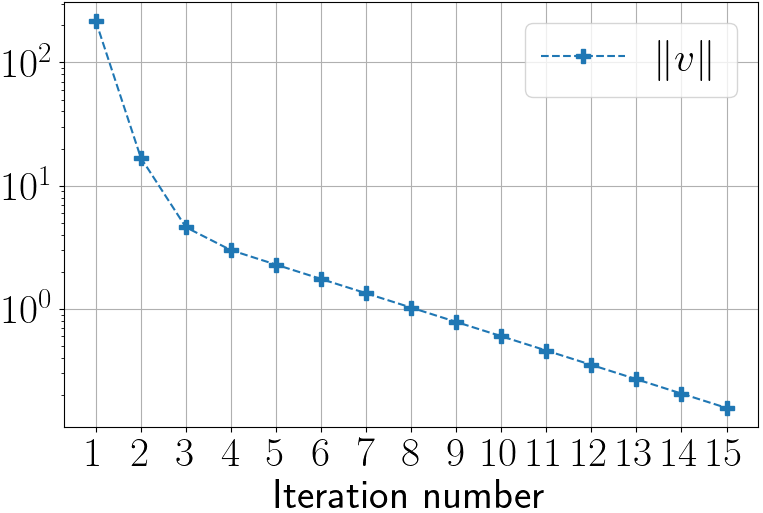}
	\includegraphics[width=0.3\textwidth]{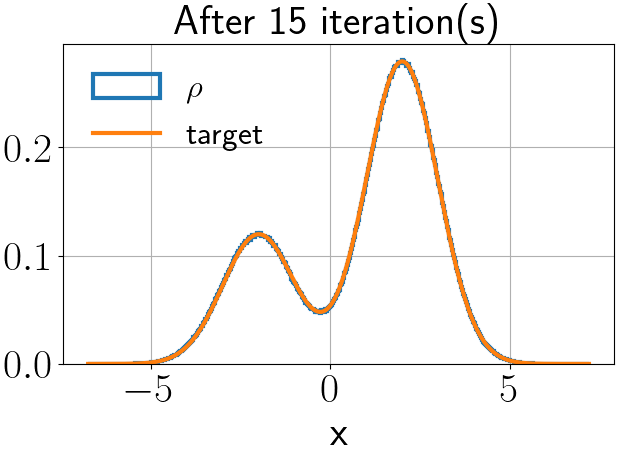}
	\includegraphics[width=0.3\textwidth]{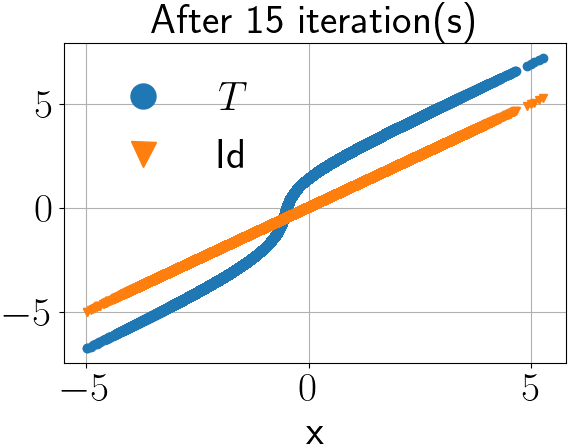}
	\caption{Convergence of SCONE. Left: the convergence of $\|v_n\|$. Center: transformed empirical density after 15 iterations. Right: $T_n$ after 15 iterations.}
 \label{fig:bimodal0.3_conv}
\end{figure}

In Figure \ref{fig:bimodal0.3}, we show the results of applying the SCONE algorithm to a bimodal target density of the form $w_1 \mathcal{N}(m_1, \sigma_1^2) + w_2 \mathcal{N}(m_2, \sigma_2^2)$ (shown in orange in the center column). The target score is shown in orange on the right column. We see from the second row of Figure \ref{fig:bimodal0.3} that the transformed scores and densities match those of the target closely after just 5 iterations. We observe numerically that solving the SCONE step \eqref{eq:newtonStep} (which reduces to an ODE in 1D) on a coarser grid (of size ${\cal O}(100)$) does not affect fast convergence when we add a small $\ell^2$ regularization. In comparison, SVGD takes ${\cal O}(500)$ iterations \citep{liu2016stein} with 100 particles for the same problem, and a vanilla GAN implementation \citep{goodfellow2020generative} can lead to unstable training and mode collapse \citep{thanh2020catastrophic,li2018limitations}. In Figure \ref{fig:bimodal0.3_conv}, we show the convergence of the SCONE algorithm to the target. We see that the identified transport map converges to the optimal map (shown in blue in the right column).

\begin{figure}
    \centering
    \includegraphics[width=0.3\textwidth]{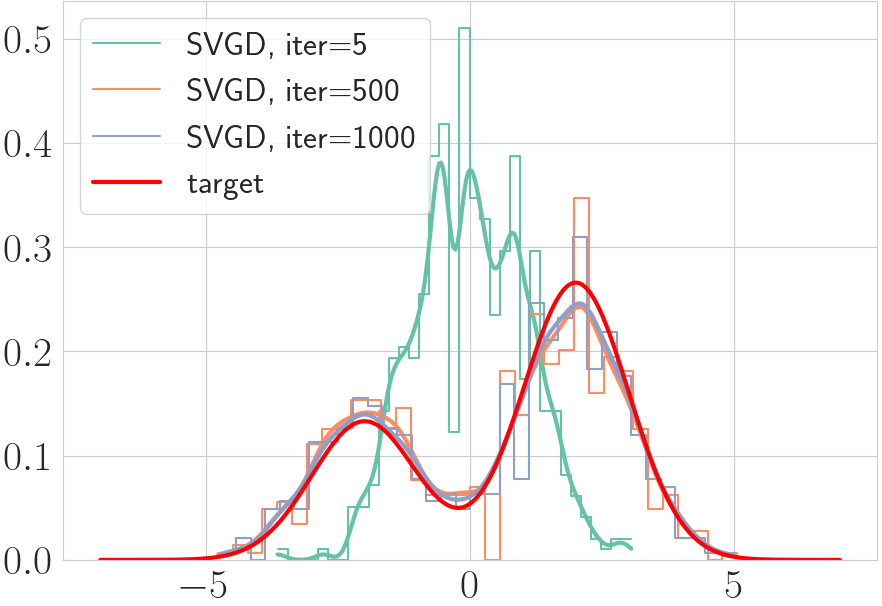}
    \includegraphics[width=0.3\textwidth]{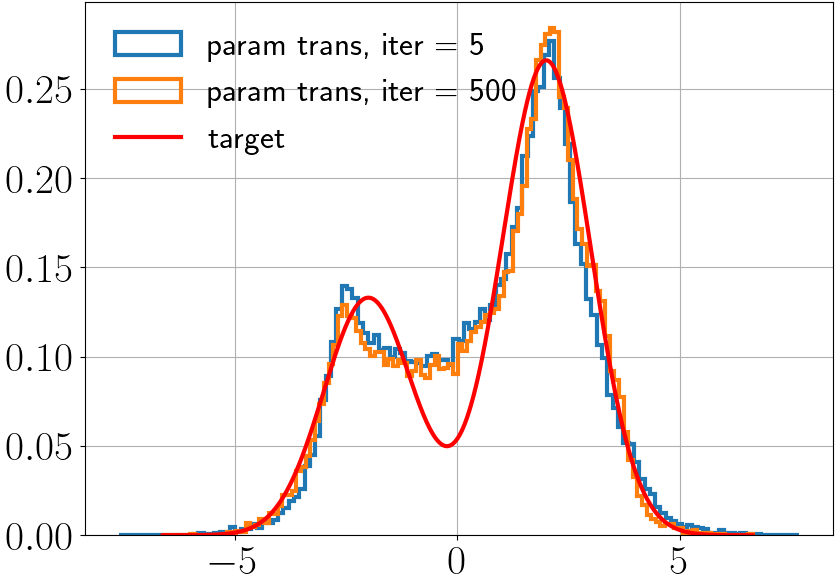}
    \includegraphics[width=0.3\textwidth]{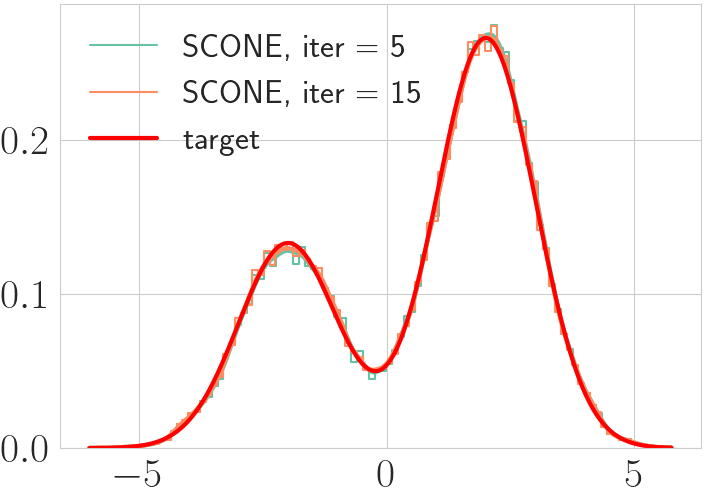}
    \caption{\textit{Left}: SVGD with RBF kernels (median heuristic for bandwidth) and 512 particles. 
    \textit{Center}: parameterized monotone transport map \citep{parno2022mpart}, with polynomial degree 10, 512$^2$/11 samples, optimized using gradient descent + line search. \textit{Right}: SCONE transport with ODE updates solved with 512 grid points.}
    \label{fig:comparison}
\end{figure}

\textbf{Comparison with other algorithms} In Figure \ref{fig:comparison}, we compare SCONE against SVGD \citep{liu2016stein} and parameterized transport maps \citep{parno2022mpart} as these are the most widely used classes of algorithms for Bayesian inference. With $N$ samples, an SVGD iteration has ${\cal O}(N^2)$ cost. With $G$ grid points, SCONE iterations have ${\cal O}(G^2)$ cost, since the linear system to be solved at a SCONE iteration is banded.
With $P$ parameters and $S$ samples to evaluate the variational (KL) objective, the optimization step for a parameterized transport map is ${\cal O}(PS).$ We choose $PS = N^2 = G^2 = 512^2$ so that the computational budget \textit{per iteration} remains the same across the different methods. We see in Figure \ref{fig:comparison} (right) that densities obtained from SCONE most closely match the target (red) after just 5 iterations, while SVGD (left) takes ${\cal O}(500)$ iterations to converge to a comparatively worse solution. 
Parameterized transport (center) converges quickly, with the help of a backtracking line search optimization, but makes a significant approximation error which persists even for higher polynomial degree (we tested up to 20). Figure \ref{fig:comparison} thus demonstrates that SCONE vastly outperforms SVGD and parameterized transport algorithms for the same computational cost.

\section{{Discussion}}
\label{sec:discussion}
We introduce a new notion of infinite-dimensional score-matching that yields a Newton-type method for sampling, and we prove sufficient conditions for its convergence. Our method applies in settings where scores of the source and target measures are easily computed. We comment on theoretical and algorithmic features of this work that will spur further research.

\textbf{Learning elliptic PDEs:} Many structure-exploiting, fast, and sample-efficient methods are emerging for learning the solution operators of linear elliptic PDEs; see, e.g., \citet{lu2021machine, boulle2023learning,schafer2021sparse}. These methods use randomized numerical linear algebra \citep{boulle2023learning}, CNN-based encoder-decoder networks \citep{ZHANG2023116214}, interpolation between deep neural network and Monte Carlo approximations \citep{nusken2023interpolating}, etc.  
These results suggest that it is possible to develop optimal methods, in terms of computational and sample complexity, for learning our SCONE update operator by exploiting 
low-rank 
structure in its solutions. 
With the same goal, we will also explore particle methods (e.g., from fluid dynamics \citep{monaghan2012smoothed, cottet2000vortex}), which can also use fast numerical linear algebra and have theoretical guarantees. 

\textbf{Newton convergence:} Under typical conditions, the classical result of Kantorovich (see \citet{GALANTAI200025} for a survey) establishes quadratic convergence starting in a ball of sufficiently small radius (as in our Theorem \ref{thm:kam-newton}). In future work, we will investigate damping and modified SCONE iterations to prevent divergence \citep{smale1985efficiency}.  
We will also develop inexact and quasi-Newton variants of SCONE, as a way of further reducing computational cost \citep{traub1980convergence} and allowing for errors in $q.$ We will derive theoretical convergence guarantees for these modified SCONEs.

\vspace{0.5in}
\textbf{Funding} NC and YMM acknowledge support from NSF grant PHY-2028125. 
YMM acknowledges support from DOE ASCR award DE-SC0023187, AFOSR award FA9550-20-1-0397, and ONR award N00014-20-1-2595.
FS acknowledges support from AFOSR award FA9550-23-1-0668 and ONR award N00014-23-1-2545.

\textbf{Acknowledgments} We are greatly indebted to Guillaume Bal for kindly pointing out that the operator $\mathcal{L}$ is not elliptic, as we had assumed in our earlier draft, and generously helping us correct the error. We would also like to thank Daniel Sharp for providing us with an mParT implementation, which we could readily use to generate Figure 4.

\bibliography{main.bib}

\appendix
\section{Other iterations based on the contraction mapping principle}
\label{appx:fixedPoint}
For a fixed $p$, the score operator, $\mathcal{G}(p,\cdot)$, is a map from the space of transformations to scores. We first define a self-map corresponding to the score-residual operator. Then, we define a fixed point iteration for the self-map that converges in a setting where it is a contraction.

For convenience, we denote by $\mathcal{G}_p$ the operator $\mathcal{G}(p,\cdot)$ that returns the transported score, given a transport map. Let $\mathbb{T}$ be a Banach space of functions on $\mathbb{R}^d$ and $\mathbb
{S}$ a Banach space of scores. Using the definition of the score operator,  
\begin{align}
    \mathcal{G}_p(T) \circ T = p (\nabla T)^{-1} - {\rm tr}((\nabla T)^{-1}\: \nabla^2 T) \: (\nabla T)^{-1} = q\circ T,
\end{align}
when $T$ is a solution of the score-matching problem. We assume that the target score $q$ is a homeomorphism onto its image. This is indeed a strong condition that multi-modal distributions, for example, do not satisfy.
Define a self-map $\mathcal{H}:\mathbb{T}\to\mathbb{T}$   
\begin{align}
    \mathcal{H}(T) = q^{-1} \circ  \mathcal{G}_p(T)\circ T.
\end{align}
Clearly, if $T$ is a solution of the score-matching problem, it is a fixed point of $\mathcal{H}$. Near a fixed point, we can define the following fixed-point iteration of $\mathcal{H}$.
\begin{align}
\label{eq:fixedPointIteration}
    T_{n+1} = \mathcal{H}(T_n) = q^{-1} \circ \mathcal{G}_p(T_n) \circ T_n,
\end{align}
with an arbitrary map $T_0$. When $\mathcal{H}$ is a contraction near its fixed point, say, $T^*$, then $T_n$ defined in \eqref{eq:fixedPointIteration} converges to $T^*$, by the contraction mapping principle. Consider one sufficient condition: when the functional derivative  $D(\mathcal{G}_p(T)\circ T)(T^*)$ and the gradient $\nabla q^{-1}$ are small in the operator norms and $C^0$ norm respectively. Then, one obtains that $\mathcal{H}$ is contraction, and hence the fixed-point iteration defined above converges to a fixed point or a transport map.

However, this is only a sufficient condition for convergence. When $\mathcal{H}$ is not contractive, the fixed point iterations may not converge, or may exhibit slower than exponential convergence. When $\mathbb{T}$ is large enough to contain multiple fixed points of $\mathcal{H}$, the iterations may oscillate between basins of attraction of multiple fixed points. 
Note that the sufficient conditions for $\mathcal{H}$ to be a contraction impose restrictions on the
behavior of the score operator near the fixed point and on the second-order derivatives of the target, which may preclude multi-modality in the target. The SCONE method, on the other hand, does not explicitly impose such a restriction at the fixed point and is hence more general.

 \section{SCONE example}
 \label{appx:gaussian}
As the first example, suppose the target is a univariate Gaussian with mean $m$ and variance $s^2$, while the source distribution is a standard normal in 1D. In this case, $q(x) = -(x - m)/s^2$ and $p(x) = -x$. The SCONE update $v_n$ satisfies the following ODE:
\begin{align}
\label{eq:newtonUpdate1D}
    p_n - q = v_n'' + q v_n' + q' v_n.
\end{align}
At $n = 0$, $p_n = p$. The above ODE is defined on an unbounded domain, without specific decay rates for the solution at the boundaries, $\pm \infty$. This allows for unbounded solutions. Notice that the solution is always affine since, at $n=0$, the left-hand side is affine. Subsequently, the update equation is satisfied by affine functions $v_n$ at every $n$ and hence $T_n$, which is a composition of affine functions, is affine. By comparing coefficients to solve for the update equations, we can obtain recurrence relationships for the slopes and intercepts of $v_n$, $T_{n+1} \coloneqq ({\rm Id} + v_n)\circ T_n$, and $p_n = (p/T_n')\circ T_n^{-1}$. We can inductively show that all three are affine functions for all $n$.
In particular, if $p_n(x) = a_n x + b_n$ and $v_n(x) = (A_n - 1)x + B_n$, we obtain,
\begin{align*}
    A_n &= -a_n s^2/2 + 1/2\\
    B_{n} &= -b_n s^2 + m/2 - a_n m s^2/2,
\end{align*}
by comparing coefficients in the update \eqref{eq:newtonUpdate1D}. Then, the update for the score gives, 
\begin{align*}
    a_{n+1} = a_n/A_n^2, b_{n+1} = -a_n B_n/A_n^2 + B_n/A_n^2.
\end{align*}
Considering the set of sequences $\{a_n, b_n, A_n, B_n\}_n$, it is clear from the relationships above that when $A_n \to 1$ and $B_n \to 0$, $a_n \to (-1/s^2)$ and $b_n \to  m/s^2$. Thus, when the iterations for $T_n$ converge, or equivalently, when $v_n \to 0$, $p_n \to q$. Moreover, in this case, the limit $T$ is the function $T(x) = sx + m$, which coincides with the increasing rearrangement on $\mathbb{R}$ (and hence the optimal map). The intermediate distributions corresponding to the scores $p_n$ are all Gaussian. 

Notice that since this convergence can be established for all $s$ and $m$, it suggests that SCONE transport converges even when in H\"older norm, $\|p-q\|$ is not small. That is, even though the derivation of the is premised on the local expansion of the score operator around $(q, {\rm Id})$, the smallness of $p-q$ and $T-{\rm Id}$ is not a necessary condition for the convergence of the method. 
\begin{figure}
\centering
\includegraphics[width=0.31\textwidth]{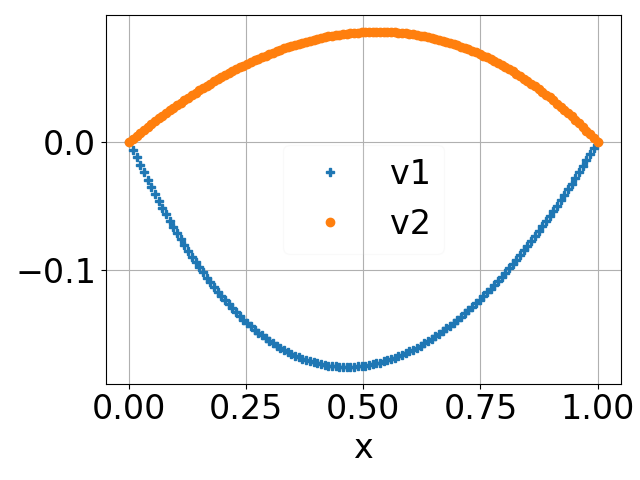}
\includegraphics[width=0.31\textwidth]{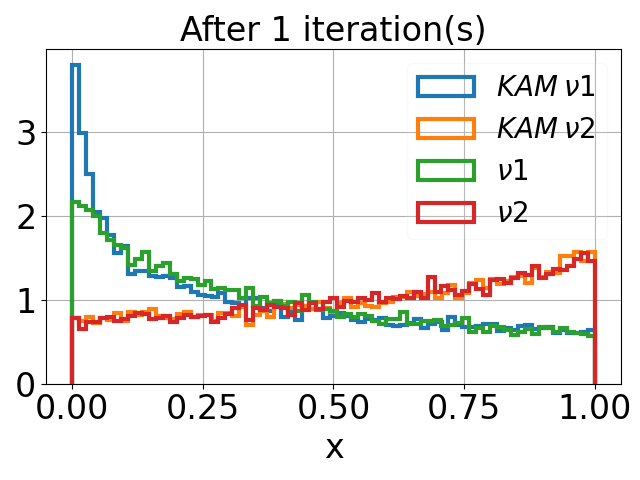}
\includegraphics[width=0.31\textwidth]{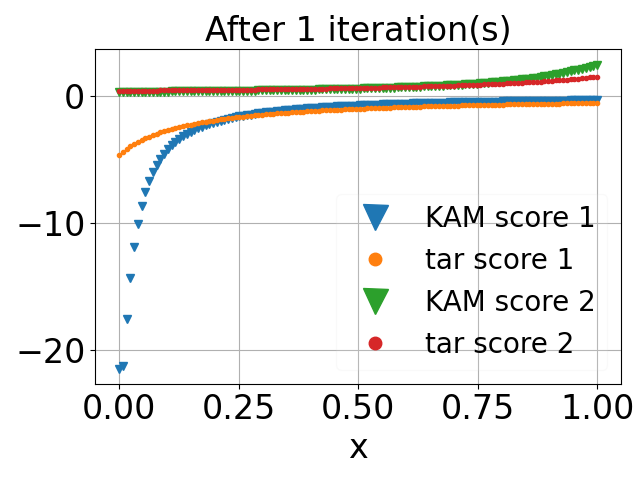} \\
\centering
\includegraphics[width=0.31\textwidth]{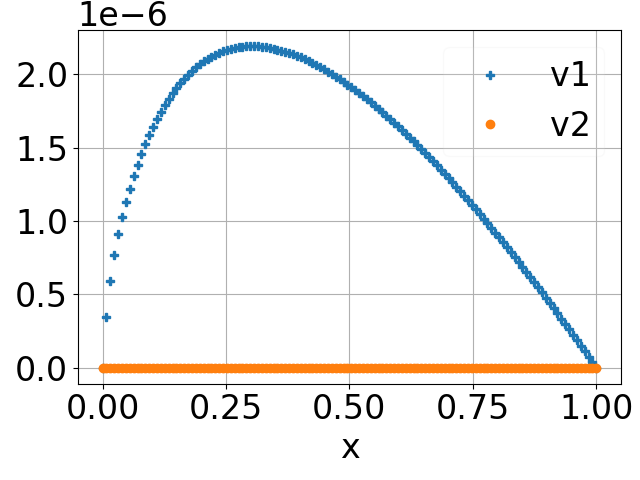}
\includegraphics[width=0.31\textwidth]{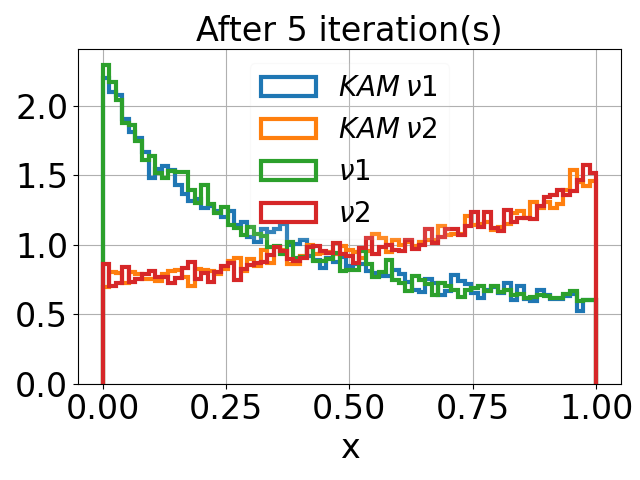}
\includegraphics[width=0.31\textwidth]{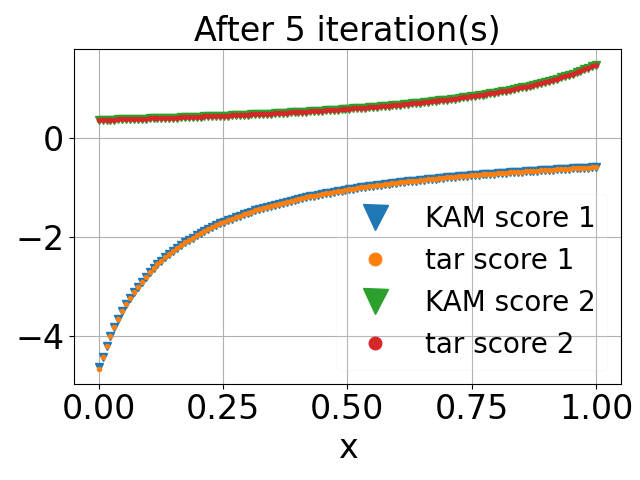} 
    \caption{Numerical validation of SCONE transport on 1D densities: the first row depicts results after 1 iteration of our Newton method and the second row after 5 iterations. The scalar field $v,$ the histogram approximations of the target density and the target score are plotted for the two different targets described in section \ref{sec:appx-num}. The results of the computed transformed densities and scores from the SCONE iteration are compared against the target densities and scores in columns 2 and 3.}
    \label{fig:kam1d}
\end{figure}
\begin{figure}
    \centering
\includegraphics[width=0.3\textwidth]{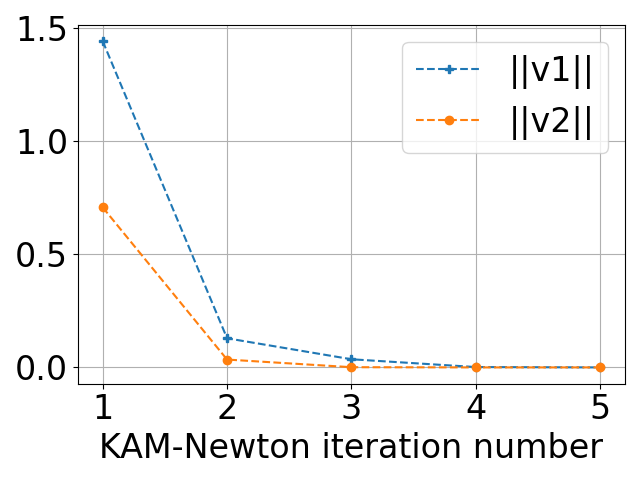}
\includegraphics[width=0.3\textwidth]{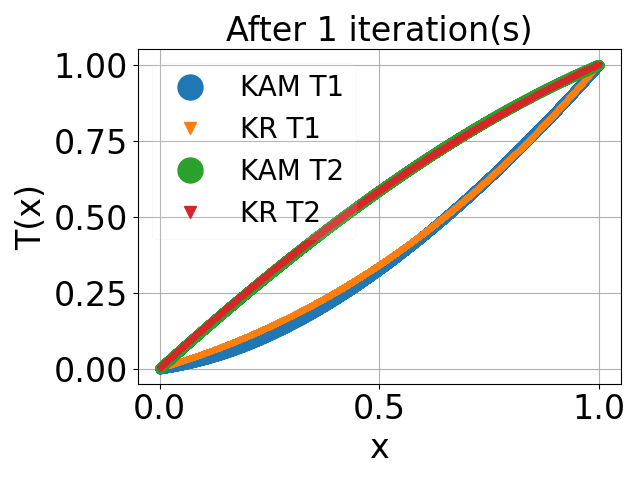}
\includegraphics[width=0.3\textwidth]{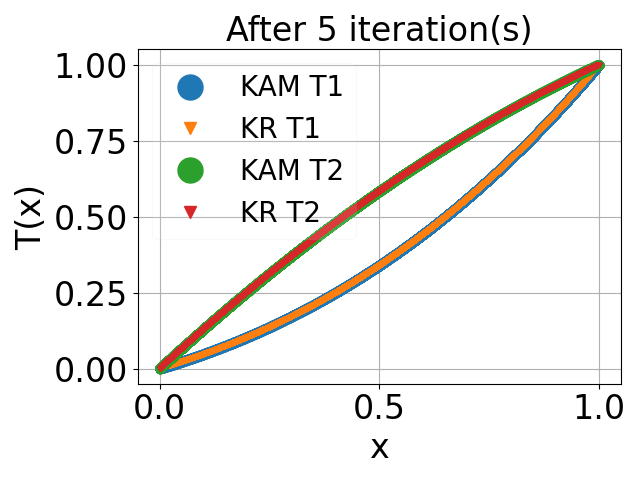}
    \caption{On the left column, we show the convergence of our algorithm ($\|v_n\| \to 0$) for the two different target densities given in section \ref{sec:appx-num}. The center and right hand side figures compare the optimal map computed analytically against the SCONE transport map computed numerically after one and five iterations respectively.}
    \label{fig:convergence}
\end{figure}

\section{Additional experiments}
 \label{sec:appx-num}
Considering 1D targets, we first give proof-of-concept numerical results to validate our SCONE construction. We consider two smooth densities supported on $[0,1]$ as our targets: $\rho_{\nu1} = ((x+1)^3 - 1)/7$ and $\rho_{\nu2} = 4/3 - (2-x)^2/3.$ These densities are shown as histograms in the second column of Figure \ref{fig:kam1d}. In 1D, the PDE that needs to be solved at every SCONE iteration is an ODE, which we solve using a finite difference method with 128 grid points and zero Dirichlet boundary conditions on $[0,1].$ Our source density is the uniform (Lebesgue) density on $[0,1]$ whose score is the zero function. As shown in Figure \ref{fig:kam1d} (third column), the transformed score matches the target score within a few SCONE iterations. The solution $v$ also quickly approaches the zero function as confirmed in the first column of Figure \ref{fig:kam1d} and Figure \ref{fig:convergence}(left), thus establishing numerically the convergence of our construction (see Theorem 3 in the main text). 
On one-dimensional domains, the optimal transport map (for a variety of costs) is simply the increasing rearrangement (see \citep{santambrogio2015optimal}, Chapter 2), which can be computed analytically in our setting, and is shown in the second and third columns of Figure \ref{fig:convergence}. 
As shown, the SCONE construction converges to this  transport map.
\subsection{1D unbounded target: bimodal Gaussian with equally weighted modes}
\label{sec:equalWeights}
Next, we consider a one-dimensional bimodal Gaussian target (shown in Figure \ref{fig:bimodal0.5}), $0.5 \mathcal{N}(-2,1) + 0.5 \mathcal{N}(2,1)$. We again solve the ODE with finite difference on $[-10,10]$. The source is taken to be the standard Gaussian (unimodal distribution). In Figure \ref{fig:bimodal0.5}, the first row presents results obtained after 1 iteration of our SCONE algorithm; the second row, after 3 iterations. We see that, after just 1 iteration, the two modes are detected, although the density and scores are not well-approximated. After 3 iterations, the empirical density (shown in blue) of the samples transported by the SCONE transport map match the target density (orange line plot) closely. As shown in Figure \ref{fig:bimodal0.5} (second row), as the SCONE update solution $v$ declines, the density and the scores approach their target values. In Figure \ref{fig:bimodal0.5_conv} (left), we show the convergence of the solution $v$. From the figure, the convergence appears to be exponentially fast, with the rate decreasing after the first 4 iterations. The final transport map (right) is taken after 5 iterations, which accurately models the target density (center). A grid size of 4096 is used for solving the SCONE iteration, but grid size reductions ${\cal O}(1000)$ produces similar convergence results. We find that more iterations are needed when the modes are well-separated, e.g., sampling from an equally weighted bimodal distribution with modes centered at -4 and 2 required around 20 iterations. 
\begin{figure}
	\includegraphics[width=0.3\textwidth]{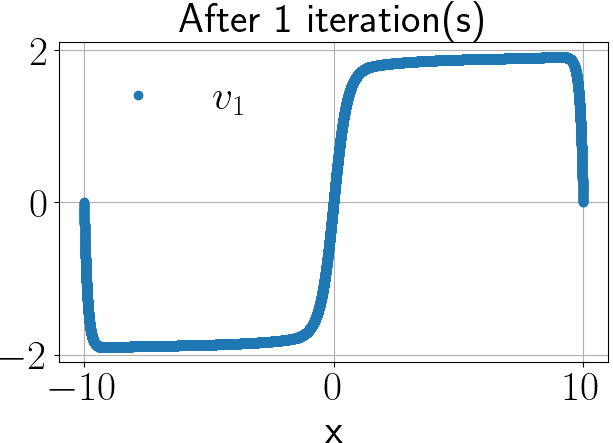}
	\includegraphics[width=0.32\textwidth]{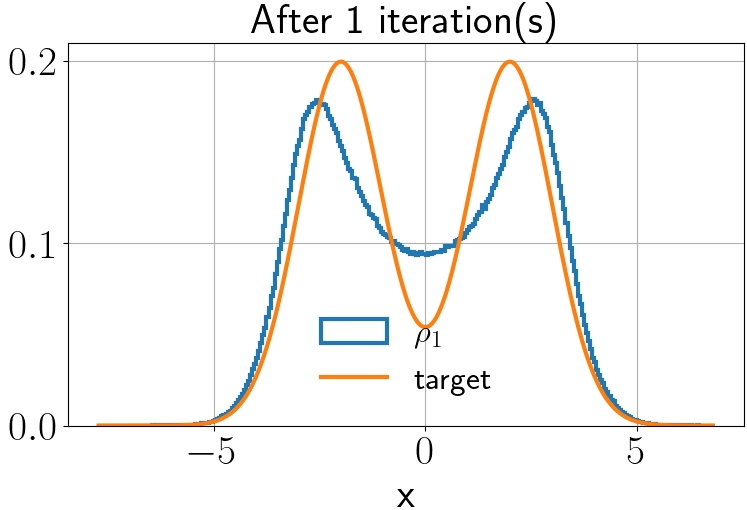}
	\includegraphics[width=0.32\textwidth]{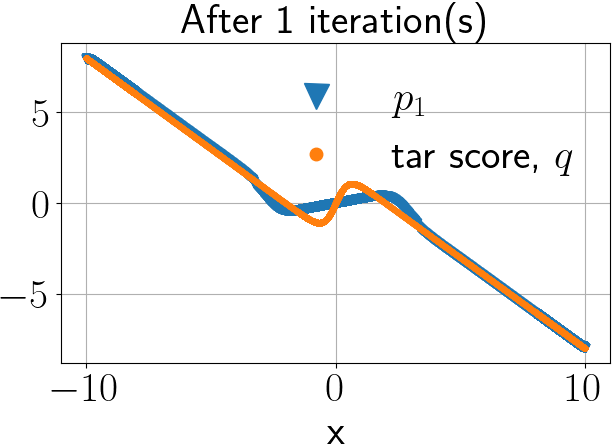}\\
	\includegraphics[width=0.3\textwidth]{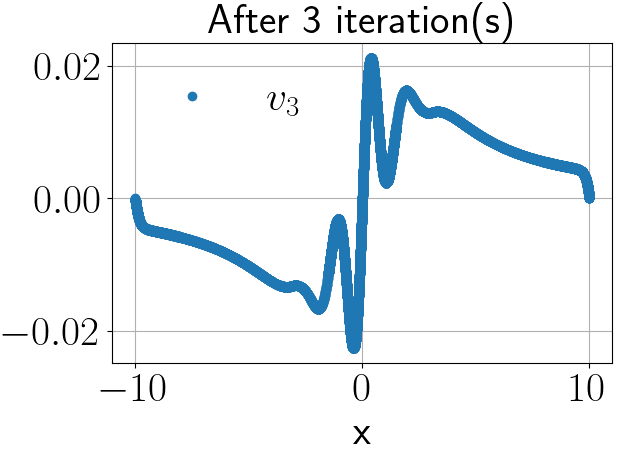}
	\includegraphics[width=0.32\textwidth]{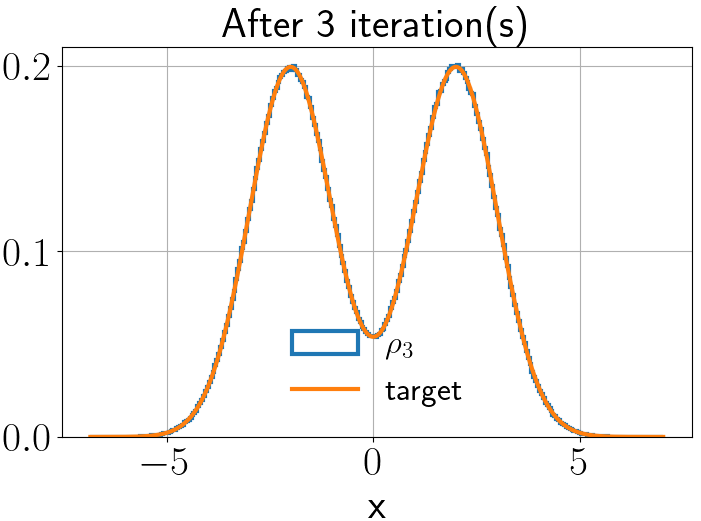}
	\includegraphics[width=0.32\textwidth]{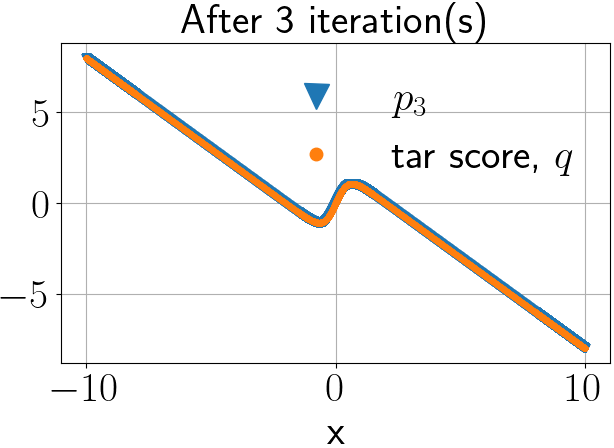}
	\caption{SCONE algorithm results: the target distribution, shown in orange in the center column, is an equally weighted bimodal Gaussian. The application of the SCONE algorithm, as described in section \ref{sec:equalWeights}, is shown in the first row after 1 iteration and in the second row after 3 iterations. The scalar field $v,$ the histogram approximations of the target density and the target score are plotted in the first, second and third columns, respectively. The results of the computed transformed densities and scores from the SCONE iteration are compared against the target densities and scores in columns 2 and 3.}
	\label{fig:bimodal0.5}
\end{figure}
\begin{figure}
	\includegraphics[width=0.3\textwidth]{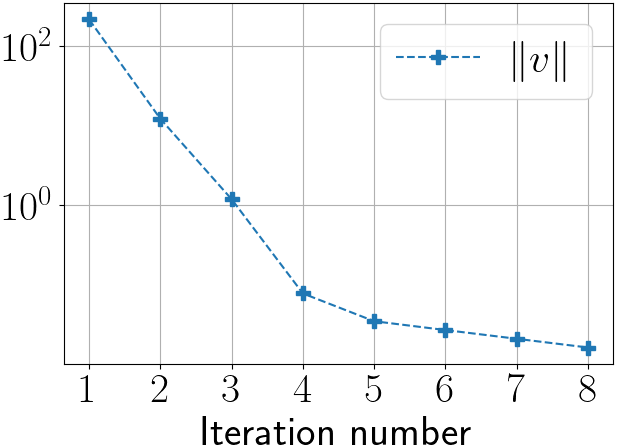}
	\includegraphics[width=0.32\textwidth]{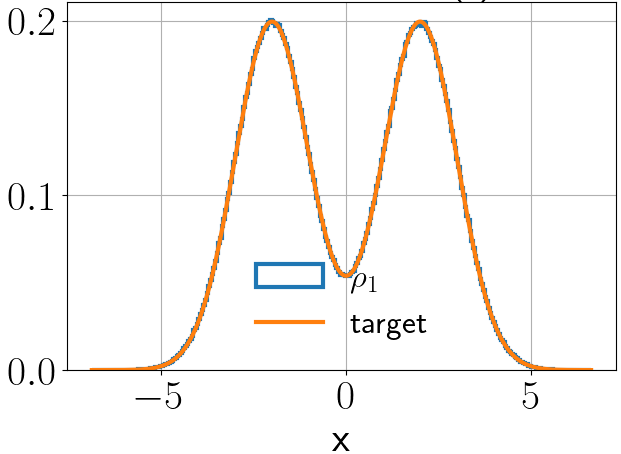}
	\includegraphics[width=0.32\textwidth]{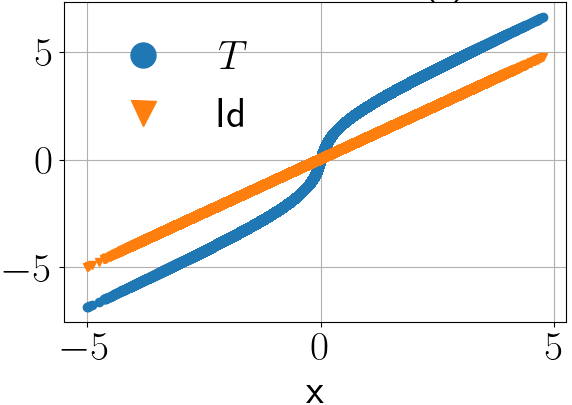}
	\caption{Convergence of SCONE iteration: the target distribution, shown in orange in the center column, is an equally weighted bimodal Gaussian. The application of the SCONE algorithm, as described in section \ref{sec:equalWeights}, results in the convergence of $\|v_n\|$, shown in the first column. The final SCONE transport map after 5 iterations is on the right (blue).}
	\label{fig:bimodal0.5_conv}
\end{figure}
\subsection{1D unbounded target: bimodal Gaussian with unequally weighted modes}
Previously in section \ref{sec:equalWeights}, when the target consisted of equally weighted modes, we used a finite difference method to solve the ODE, and then function interpolation to evaluate $v$ at the samples. Using these interpolated values, the score function, $p$ was updated, and the iterations continued by solving the ODE again. This vanilla scheme leads to numerical blow-up when the target has unequal weights. The reason is that errors in the solution of $v$ leads to the divergence of our Newton-like method. We observe that adding a small regularization term in the finite-difference ODE solution ($\ell^2$ regularization parameter set to 0.01), along with refining the grid near the points where $q'$ is large, induces convergence. The results are in the main text (section 5).

We provide the source code implementing the SCONE algorithm on all the examples above in \citep{code}. This contains `oneD.py' that implements the SCONE algorithm on all the examples above. The source file `oneD\_nonUni.py' implements adaptive grid refinement in the finite-difference solver. The unit tests that generate all the figures in this section are in `tests/test\_1D.py'. The code is written in Python and uses numpy and scipy.

\end{document}